\newcommand{\cadlag}{\mbox{c\`adl\`ag}} 
\newcommand{\cad}{\mbox{c\`ad}}
\newcommand{\lag}{\mbox{l\`ag}}
\newcommand{\caglad}{\mbox{c\`agl\`ad}} 
\newcommand{\cag}{\mbox{c\`ag}}
\newcommand{\lad}{\mbox{l\`ad}}
\def\reals{\mathbb{R}}
\def\uball{\mathbb{B}}
\def\ereals{\overline{\mathbb{R}}}
\def\interior{\mathop{\rm int}\nolimits}
\def\argmin{\mathop{\rm argmin}\limits}
\def\dom{\mathop{\rm dom}\nolimits}
\def\naturals{\mathbb{N}}
\def\cl{\mathop{\rm cl}\nolimits}
\def\co{\mathop{\rm co}}
\def\tos{\rightrightarrows}
\newcommand{\pathy}{\mathtt{y}} 
\newcommand{\pathspace}{\mathsf{D}} 
\def\epi{\mathop{\rm epi}}
\newtheorem{theorem}{Theorem}
\newtheorem{lemma}[theorem]{Lemma}
\newtheorem{corollary}[theorem]{Corollary}
\newtheorem{proposition}[theorem]{Proposition}
\newtheorem{example}{Example}
\newtheorem{remark}{Remark}
\theoremstyle{definition}
\newtheorem{definition}{Definition}
\newtheorem{assumption}{Assumption}
\begin{document}
\title{Michael selections and Castaing representations with  $\cadlag$ functions}
\author{Ari-Pekka Perkki\"o\thanks{Department of Mathematics, Ludwig Maximilians Universit\"at M\"unchen, Theresienstr. 39, 80333 M\"unchen, Germany.}
\and
Erick Trevi\~no-Aguilar\thanks{Instituto de Matem\'aticas, Unidad Cuernavaca, Universidad Nacional Aut\'onoma de M\'exico.}}

\maketitle

\begin{abstract}
Michael's selection theorem implies that a closed convex nonempty-valued mapping from the Sorgenfrey line to a euclidean space is inner semicontinuous if and only if the mapping can be represented as the image closure of right-continuous selections of the mapping. This article gives necessary and sufficient conditions for the representation to hold for cadlag selections, i.e., for selections that are right-continuous and have left limits. The characterization is motivated by continuous time stochastic optimization problems over cadlag processes. Here, an application to integral functionals of cadlag functions is given.
\end{abstract}

\noindent\textbf{Keywords.} $\cadlag$ functions, Castaing representations, Michael selection theorem,  set-valued analysis.  
\newline
\newline
\noindent\textbf{AMS subject classification codes.} 46N10, 60G07

\section{Introduction} \label{sec:Intro}
The celebrated Michaels' selection theorem \cite[Theorem 3.2'']{mic56} characterizes in a $T_1$ topological space $(X,\tau)$ the property of paracompactness through the existence of a continuous selection for each mapping $\Gamma : X \tos Y$ which is convex closed valued in a fixed Banach space $Y$ and satisfies the minimal continuity condition of inner-semicontinuity (this was called lower semicontinuity in \cite{mic56}). Once the existence of a continuous selection is established it is natural to ask if 
\[
\Gamma_t = \cl \{\pathy(t) \mid \pathy  \in C(\Gamma,\tau)\},
\]
where $C(\Gamma,\tau)$ is the set of selections of $\Gamma$ which are continuous with respect to $\tau$. For this representation problem, \cite[Lemma 5.2]{mic56} provides a positive answer for perfectly normal topological spaces and separable Banach spaces.  

For the applications of stochastic processes and the construction of paths with specific properties, the case $X=[0,T]$ for $T>0$ and $Y=\reals^d$ is relevant.  Take a a convex closed-valued  mapping $\Gamma:[0,T] \tos \reals^d$. Consider the euclidean topology $\tau_e$ in  $\reals$ relativized to $[0,T]$  and assume that $\Gamma$ is  $\tau_e$ inner-semicontinuous. Then  \cite[Theorem 3.2'']{mic56} guarantees the existence of a $\tau_e$ continuous selection of $\Gamma$ and \cite[Lemma 5.2]{mic56} a representation with $\tau_e$ continuous selections.  Applied to the  Sorgenfrey arrow topology  $\tau_r$ relativized to $[0,T]$ and assuming that  $\Gamma$ is $\tau_r$ inner-semicontinuous, gives the representation
\[
\Gamma_t = \cl \{\pathy(t) \mid \pathy  \in C(\Gamma,\tau_r)\},
\]
where $C(\Gamma,\tau_r)$ denotes the family of selections of $\Gamma$ which are continuous with respect to $\tau_r$. Continuous functions in $\tau_r$ are right-continuous in the usual sense.

In many applications of stochastic processes it is usual to work with paths lying between continuous and right-continuous paths, namely, with right continuous paths having left limits  (abrv. $\cadlag$). In this case, the mapping $\Gamma$ having a representation with $\cadlag$ selections will satisfy other properties beyond inner-semicontinuity with respect to a given topology. The aim of this article is to formulate and characterize an equivalent property for a mapping having a representation with $\cadlag$ selections.

Our motivation comes from  continuous time stochastic optimization, especially stochastic singular control. In a series of articles, Rockafellar studied continuous selections and integral functionals of continuous functions and gave applications to convex duality in optimal control and in problems of Bolza; see the review article \cite{roc78}. The article \cite{pp18d} builds on Rockafellars results  and studies the stochastic setting of "regular" stochastic processes. Here, we extend the theoretical background to a deterministic setting with \cadlag\ functions. This forms a starting point of the companion paper \cite{pt21a} that deals with integral functionals of general \cadlag\ stochastic processes and allows us to go beyond the scope of \cite{pp18d}.  The follow-up papers \cite{pt22a,pt22b} give applications to finance and stochastic singular control. 
 
The rest of the article is organized as follows. In Section \ref{sec:notations}, we introduce notations, assumptions and basic concepts. Specially, here we formulate the main assumption that allows us to obtain existence of  $\cadlag$ selections and a representation of $\Gamma$ through them; see \thref{ass}.  In this same section we formulate our main result, the Theorem \ref{lab:maintheorem1}.  In Section \ref{sec:necessaryconditions}, we prove that the conditions in Theorem \ref{lab:maintheorem1} are necessary for a representation of a mapping $\Gamma$ in terms of its $\cadlag$ selections. The most relevant part being that a representation with $\cadlag$ selections implies \thref{ass}. In Section \ref{sec:pointscontinuity}, we show that $\Gamma$ coincides outside a countable set with its mapping of left limits. This result will allow us to obtain a $\cadlag$ selection which is continuous outside a  countable set depending only on the mapping. In Section  \ref{sec:cadlagselectionrepresentation}, we prove two results that yield the sufficiency in Theorem  \ref{lab:maintheorem1}, these results are Theorem \ref{lab:maintheorem2} and Proposition \ref{labprop:rep}. In Section \ref{sec:examples}, we illustrate with examples our main result, Theorem \ref{lab:maintheorem1}.  In Section \ref{lab:sectionApp}, we give an application to integral functionals of cadlag functions. 
\section{Notations and main theorem} \label{sec:notations}
Let $\uball$ be the open unit ball of $\reals^d$, $T>0$, and $\pathspace$ the class of $\cadlag$ functions $\pathy:[0,T] \to \reals^d$.  We denote by $\tau_r$  the topology on $\reals$ generated by the intervals of the form $[a,b)$ with $a<b$, and by $\tau_l$ the topology on $\reals$ generated by the intervals $(a,b]$. Throughout the article, we fix a convex-valued mapping $\Gamma : [0,T] \tos \reals^d$. The set
\[
\dom \Gamma:=\{t\in [0,T] \mid \Gamma_t \neq \emptyset\}
\]
is the {\em domain} of $\Gamma$. A set-valued mapping is said to have {\em full domain} if its domain is the whole $[0,T]$.
 
The set $\pathspace(\Gamma)$ is the class of functions  $\pathy \in \pathspace$ such that $\pathy(t) \in \Gamma_t$ for all  $t \in [0,T]$. In other words, $\pathspace(\Gamma)$ is the class of $\cadlag$ selections of $\Gamma$. In this article, our main result establishes an equivalent condition for the validity of the representation
\begin{equation} \label{eq:MichaelRepresentation}
\Gamma_t = \cl \{\pathy(t) \mid \pathy  \in \pathspace(\Gamma)\}.
\end{equation}
We call \eqref{eq:MichaelRepresentation} the $\cadlag$ representation of $\Gamma$. If the representation holds, Proposition~\ref{labprop:rep} below shows that there exists a countable family $\{ \pathy_{\nu}(t)\}_{\nu \in \naturals}$ of $\cadlag$ selections with
\[
\Gamma_t = \cl \{\pathy_{\nu}(t), \nu \in \naturals \}.
\] 
 If this representation holds for selections that are merely measurable, we arrive at a {\em Castaing representation} of $\Gamma$; see, e.g., \cite{rw98}. Hence, we obtain a Castaing representation with $\cadlag$ functions.

In the next definition we recall inner-semicontinuity  that is necessary for the representation \eqref{eq:MichaelRepresentation}. Example \ref{labEx1} below shows it is not sufficient.
\begin{definition}\label{labDefisc}
	A mapping $\phi$ is inner-semicontinuous with respect to the relative $\tau_r$ topology in $[0,T]$,  if for each open $O \subset \reals^d$, the set
	\[
	\phi^{-1}(O):=\{t \in [0,T] \mid O \cap \phi_t \neq \emptyset\}
	\]
	is the intersection of $[0,T]$ with a $\tau_r$-open set, or briefly, it is relatively $\tau_r$-open.  This property will be denoted by $\tau_r$-isc.
\end{definition}

\begin{example}\label{labEx1}
The mapping $\phi: [0,\pi] \tos \reals$ defined for $t \in [0,\pi)$ by  $\phi_t:=\{\sin(1/(\pi-t))\}$  and $\phi_{\pi}=\{2\}$ shows that the property in Definition \ref{labDefisc} is not sufficient for  the representation \eqref{eq:MichaelRepresentation}.  Indeed, $\phi$ is  $\tau_r$-isc but  \eqref{eq:MichaelRepresentation} fails.
\end{example}

As suggested by the previous example, left sided limits of a mapping are an essential element to the characterization of the representation \eqref{eq:MichaelRepresentation}. They are defined as follows. For a mapping $\phi:[0,T] \tos \reals^d$ let $\vec\phi_0:=\{0\}$ and
\begin{align*}
\vec\phi_t
&:= \liminf_{s \upuparrows t} \phi_s=\bigcap_{\{t_n\}_{n \in \naturals} } \liminf_{n \to \infty} \phi_{t_n},
\end{align*}
where the limits are in the sense of \cite[Section 5.B]{rw98} and the intersection is over all strictly increasing sequences $\{t_n\}_{n \in \naturals}$ converging to $t$.  Consistent with $\vec\phi_0:=\{0\}$ we define for a $\cadlag$ function $\pathy$ the left limit $\pathy(0-):=0$.

We state a basic property of $\vec \Gamma$ which follows directly from its definition. It is however quite useful and we formulate it as a lemma for reference. Note the roles of $\vec\Gamma^{-1}$ and   $\Gamma^{-1}$.
\begin{lemma}\label{lem:Elementary1}
Assume the mapping $\vec \Gamma$ has a full domain.  Let $O  \subseteq \reals^d$ be open with $\vec\Gamma^{-1}(O)\neq \emptyset$.  For $t \in \vec\Gamma^{-1}(O)\cap (0,T]$ there exists $\delta>0$ such that $(t-\delta,t) \subset \Gamma^{-1}(O)$.
\end{lemma}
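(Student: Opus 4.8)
The plan is to exploit the sequential description of the inner limit together with the openness of $O$. Since $t\in\vec\Gamma^{-1}(O)$, we may fix a point $x\in\vec\Gamma_t\cap O$, and since $O$ is open we may choose $\epsilon>0$ with $x+\epsilon\uball\subset O$. By the very definition of $\vec\Gamma_t$, the point $x$ belongs to $\liminf_{n\to\infty}\Gamma_{t_n}$ for \emph{every} strictly increasing sequence $t_n\upuparrows t$; recalling the distance characterization of the inner limit \cite[Section 5.B]{rw98}, this says $d(x,\Gamma_{t_n})\to 0$ for every such sequence.

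The core of the argument is to upgrade this sequential statement to the genuine one-sided limit $d(x,\Gamma_s)\to 0$ as $s\uparrow t$. I would argue by contradiction: if $d(x,\Gamma_s)$ does not tend to $0$, then $\limsup_{s\uparrow t}d(x,\Gamma_s)>0$, so there exist $\eta>0$ and points $s_n<t$ with $s_n\to t$ and $d(x,\Gamma_{s_n})\ge\eta$ for all $n$. From $\{s_n\}$ one extracts a strictly increasing subsequence $\{t_k\}$ converging to $t$ (always possible since $t\in(0,T]$ and the $s_n$ approach $t$ strictly from below), and along it $d(x,\Gamma_{t_k})\ge\eta$ for all $k$. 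This contradicts $x\in\liminf_k\Gamma_{t_k}$, which holds because $x\in\vec\Gamma_t$. Hence $d(x,\Gamma_s)\to 0$ as $s\uparrow t$.

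With the left limit of the distance established, the conclusion is immediate. Since $d(x,\Gamma_s)\to 0$ and $\epsilon>0$, there is $\delta>0$ such that $d(x,\Gamma_s)<\epsilon$ for all $s\in(t-\delta,t)$. For each such $s$ this produces a point of $\Gamma_s$ inside $x+\epsilon\uball\subset O$, so $\Gamma_s\cap O\neq\emptyset$, i.e.\ $s\in\Gamma^{-1}(O)$. Thus $(t-\delta,t)\subset\Gamma^{-1}(O)$, as claimed.

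I expect the only delicate point to be the passage from "all strictly increasing sequences" to the actual left-hand limit in the second paragraph: one must ensure the extracted subsequence is strictly increasing and still converges to $t$, so that it is admissible in the defining intersection for $\vec\Gamma_t$. The full-domain hypothesis on $\vec\Gamma$ guarantees $\vec\Gamma_t\neq\emptyset$, and through $x\in\vec\Gamma_t$ it forces $\Gamma_s$ to be nonempty for $s$ near $t$ from the left, so none of the distances degenerate to $+\infty$ and the argument is not vacuous.
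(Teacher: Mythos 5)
Your proof is correct and follows essentially the same route as the paper's: both argue by contradiction, extracting a strictly increasing sequence $t_n\upuparrows t$ along which $\Gamma_{t_n}$ stays at distance at least $\eta$ from a chosen point of $\vec\Gamma_t\cap O$, which contradicts the defining intersection over all strictly increasing sequences. Your intermediate reformulation via the left limit of $s\mapsto d(x,\Gamma_s)$ is just a repackaging of the same step, and your attention to the strict monotonicity of the extracted subsequence is exactly the (minor) delicate point.
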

\begin{proof}
Assume for a contradiction that an strictly increasing sequence $\{t_n\}_{n\in \naturals} \subset [0,T] \setminus \Gamma^{-1}(O)$ exists  converging to $t$.  Take $y \in O\cap \vec \Gamma_t$ and $\eta>0$ such that $y + 2\eta \uball \subset O$.  Then $d(y,\Gamma_{t_n}) \geq \eta$ and there is no sequence $\{y_n\}_{n \in \naturals}$ with $y_n \in \Gamma_{t_n}$ converging to $y$, a contradiction with the definition of $\vec \Gamma$. 
\end{proof}

We show in Lemma \ref{labLemma1} below that $\dom \vec \Gamma=[0,T]$, where $\dom \vec \Gamma$ denotes the  domain of $\vec \Gamma$,  is sufficient for the existence of selections of the ``$\epsilon$-fattening'' $\Gamma + \epsilon \uball$. We call selections of the $\epsilon$-fattening as  $\epsilon$-selections. Note that the mapping $\phi$ in Example \ref{labEx1} fails this property since $\vec \phi$ is empty at $T$.  For actual selections we verify in Proposition \ref{labprop:neccesaryA} below that the stronger property of the next assumption is necessary. In Theorem \ref{lab:maintheorem2}  and Proposition \ref{labprop:rep} we will prove it is also sufficient.
\begin{assumption}\thlabel{ass}
\label{labAssA}
For every $t \in (0,T]$ and bounded open set  $O\subset \reals^d$,
\[
(t-\delta,t) \subset \Gamma^{-1}(O) \mbox{ for some } \delta>0 \implies   \overrightarrow{\Gamma\cap O} \neq \emptyset \mbox{ at } t.
\]
\end{assumption}
The assumption rules out ``oscillations from the left''. For instance, the wildly oscillating mapping $\phi$ in Example~\ref{labEx1} does not satisfy the above assumption.  

\begin{remark}\label{labRemwAssA}
Note that \thref{ass} implies $t\in \vec\Gamma^{-1}(\cl O)$, but  $t\in \vec\Gamma^{-1}(O)$ does not need not hold. Indeed, defining
\[
\phi:= \begin{cases}
[0,1-t] & \mbox{ for } t\in [0,1)\\
\{2\} & \mbox{ for } t=1,
\end{cases}
\] 
and choosing $O=(0,1)$, we have $\phi^{-1}(O)=[0,1)$ while  $1 \notin  \vec \phi^{-1}(O)$ and $1 \in  \vec \phi^{-1}(\cl O)$.  Note that $\phi$ has the representation \eqref{eq:MichaelRepresentation}. 
\end{remark}

The next theorem is the main result of the article. The necessity is established in Proposition \ref{labprop:neccesaryA} while the sufficiency is obtained from Theorem \ref{lab:maintheorem2} and  Proposition~\ref{labprop:rep}.

\begin{theorem}\label{lab:maintheorem1}
Assume that $\Gamma$ is closed convex-valued with full domain. Then 
\begin{equation*}
\Gamma_t = \cl \{\pathy(t) \mid \pathy \in \pathspace(\Gamma)\}
\end{equation*}
if and only if  $\Gamma$ is $\tau_r$-isc,  $\vec \Gamma$ has full domain and \thref{ass} holds. In this case,
\begin{equation*}
\Gamma_t = \cl \{\pathy(t) \mid \pathy \in \pathspace(\Gamma)  \mbox{  continuous on } [0,T]\setminus D_1\},
\end{equation*}
where $D_1$ is the countable set defined by
\[
D_1:=\{t \in (0,T] \mid \Gamma_t  \not \subset \vec\Gamma_t\}.
\]
\end{theorem}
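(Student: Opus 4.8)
The plan is to prove the two implications separately and then upgrade the sufficiency direction to the refined representation with selections continuous off $D_1$.

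\emph{Necessity.} Assuming \eqref{eq:MichaelRepresentation}, the first two conditions are read off directly from the regularity of selections. For $\tau_r$-iscness, fix an open $O$ and a point $t_0\in\Gamma^{-1}(O)$; the representation supplies some $\pathy\in\pathspace(\Gamma)$ with $\pathy(t_0)\in O$, and right-continuity of $\pathy$ forces $\pathy(s)\in O$, hence $s\in\Gamma^{-1}(O)$, on an interval $[t_0,t_0+\delta)$, so $\Gamma^{-1}(O)$ is relatively $\tau_r$-open. That $\vec\Gamma$ has full domain is also immediate: any $\pathy\in\pathspace(\Gamma)$ has a left limit $\pathy(t-)$ at each $t\in(0,T]$, and along every sequence $t_n\upuparrows t$ we have $\pathy(t_n)\in\Gamma_{t_n}$ with $\pathy(t_n)\to\pathy(t-)$, whence $\pathy(t-)\in\vec\Gamma_t$. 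The delicate part is \thref{ass}: given a bounded open $O$ with $(t-\delta,t)\subset\Gamma^{-1}(O)$, I would produce a single $\pathy\in\pathspace(\Gamma)$ whose values lie in $O$ on a left-neighborhood of $t$, so that $\pathy(t-)$ belongs to $\overrightarrow{\Gamma\cap O}$ at $t$ by the argument just used. A naive concatenation of selections chosen at a sequence $s_k\upuparrows t$ need not have a left limit at $t$, which is precisely the oscillation forbidden by the Assumption; the point is that boundedness of $O$ together with the fact that \emph{every} selection is $\cadlag$ lets one pass to a (sub)limit of the left traces of such pieces and realize an accumulation point in $\cl O\cap\vec\Gamma_t$.

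\emph{Sufficiency.} Assume $\Gamma$ is $\tau_r$-isc, $\vec\Gamma$ has full domain, and \thref{ass} holds. I would run the $\cadlag$ analogue of Michael's iteration. First, using $\dom\vec\Gamma=[0,T]$, construct $\cadlag$ $\epsilon$-selections of the fattening $\Gamma+\epsilon\uball$ (the content flagged by Lemma~\ref{labLemma1}): on right-neighborhoods $\tau_r$-iscness furnishes local right-continuous approximate selections, while \thref{ass} together with $\vec\Gamma_t\ne\emptyset$ and Lemma~\ref{lem:Elementary1} guarantees a well-defined left limit lying in $\cl(\Gamma_t+\epsilon\uball)$, so the local pieces glue into a genuine $\cadlag$ function. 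One then iterates along a summable sequence $\epsilon_n\downarrow0$, controlling the sup-distance $\|\pathy_{n+1}-\pathy_n\|$ so that the $\pathy_n$ converge uniformly; the uniform limit is $\cadlag$ and, since $\Gamma$ is closed-valued, an exact selection. Fixing a target $y\in\Gamma_{t_0}$ throughout the iteration yields a selection passing arbitrarily close to $y$ at $t_0$, and ranging $t_0$ and $y$ over a countable dense set produces the countable family giving \eqref{eq:MichaelRepresentation}. I expect this gluing-of-left-limits step to be the main obstacle, as it is where the three hypotheses are genuinely consumed.

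\emph{Refinement off $D_1$.} To obtain selections continuous on $[0,T]\setminus D_1$, I would invoke the result of Section~\ref{sec:pointscontinuity} that $\Gamma_t=\vec\Gamma_t$ outside a countable set, which makes $D_1=\{t:\Gamma_t\not\subset\vec\Gamma_t\}$ countable and gives $\Gamma_t\subset\vec\Gamma_t$ (a form of left inner-semicontinuity) at every $t\notin D_1$. Carrying out the construction above while respecting $\Gamma_t\subset\vec\Gamma_t$ off $D_1$ forces the left limit of the limiting selection to coincide with its value there, so the selection is continuous on $[0,T]\setminus D_1$ and merely $\cadlag$ on the countable set $D_1$; this yields the final displayed representation.
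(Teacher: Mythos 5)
Your overall architecture matches the paper's (necessity read off from the selections plus a dedicated argument for \thref{ass}; sufficiency by a \cadlag\ Michael iteration; the refinement off $D_1$ via the countability result of Section~\ref{sec:pointscontinuity}). However, there is a genuine gap at exactly the point you flag as delicate: the necessity of \thref{ass}. Your proposed fix --- using boundedness of $O$ to ``pass to a (sub)limit of the left traces'' of selections chosen at times $s_k\upuparrows t$ and thereby ``realize an accumulation point in $\cl O\cap\vec\Gamma_t$'' --- does not prove the required conclusion. A subsequential limit of points $\pathy_k(s_k)\in\Gamma_{s_k}$ lies only in $\limsup_{s\upuparrows t}\Gamma_s$, not in the liminf; and even a point of $\cl O\cap\vec\Gamma_t$ is strictly weaker than what \thref{ass} asserts, namely $\overrightarrow{\Gamma\cap O}_t=\liminf_{s\upuparrows t}(\Gamma_s\cap O)\ne\emptyset$, which requires approximation along \emph{every} strictly increasing sequence $s_k\upuparrows t$ (compare Remark~\ref{labRemwAssA}, where $\vec\Gamma_t\cap\cl O\ne\emptyset$ is noted to be only a consequence of the assumption). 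The paper closes this gap differently in Proposition~\ref{labprop:neccesaryA}: it runs a supremum argument on the set $A$ of times $z$ up to which some $\pathy\in\pathspace(\Gamma)$ stays in $O$ on $[t-\delta,z)$, extending by a \emph{single} concatenation at $z^*=\sup A$ (which preserves the \cadlag\ property, unlike infinitely many concatenations accumulating at $t$) to force $z^*=t$. The resulting single selection takes values in $\Gamma\cap O$ on all of $[t-\delta,t)$, and its left limit at $t$ --- which exists because the selection is \cadlag\ --- then lies in $\overrightarrow{\Gamma\cap O}_t$ along every sequence. You need some such device producing one selection valid on an entire left neighborhood; compactness of $\cl O$ alone cannot substitute for it.

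A secondary, smaller gap is in the final step of sufficiency: ``ranging $t_0$ and $y$ over a countable dense set'' does not obviously yield the representation at every $t\in[0,T]$, since a selection close to $y$ at a nearby $t_0$ need not be close to $y$ at $t$ itself. Proposition~\ref{labprop:rep} instead covers $\Gamma^{-1}(y+\epsilon\uball)$ by countably many nondegenerate closed intervals $[a_n,b_n]$ (using that $\tau_r$ is hereditarily Lindel\"of), restricts $\Gamma$ to $y+\epsilon\uball$ on each such interval, and verifies that the restricted mapping again satisfies $\tau_r$-iscness, full domain of its left limit, and \thref{ass} before invoking Theorem~\ref{lab:maintheorem2}; that verification is where the hypotheses are consumed a second time and should not be omitted.
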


\section{Necessity of the $\cadlag$ representation} \label{sec:necessaryconditions}
It is clear that the cadlag representation \eqref{eq:MichaelRepresentation} implies that $\Gamma$ is a closed-valued mapping with full domain and that $\Gamma$ is $\tau_r$-isc. Furthermore, $\vec \Gamma$ must have full domain. It is less obvious that the representation \eqref{eq:MichaelRepresentation} yields \thref{ass}.
\begin{proposition}\label{labprop:neccesaryA}
If the representation \eqref{eq:MichaelRepresentation} holds, then $\Gamma$ satisfies \thref{ass}.
\end{proposition}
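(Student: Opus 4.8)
The plan is to unwind the conclusion and reduce it to producing a single selection possessing a left limit. By the definition of the left limit, ``$\overrightarrow{\Gamma\cap O}\neq\emptyset$ at $t$'' means that there is a point $y$ with $d(y,\Gamma_s\cap O)\to 0$ as $s\upuparrows t$; equivalently, $y$ lies in the inner limit of $\Gamma_s\cap O$ along \emph{every} strictly increasing sequence $s_n\uparrow t$. I would set $\Psi_s:=\cl(\Gamma_s\cap O)$ for $s\in(t-\delta,t)$. These are nonempty closed convex subsets of the compact set $\cl O$, and since the representation forces $\Gamma$ to be $\tau_r$-isc, the map $s\mapsto\Gamma_s\cap O$, and hence $\Psi$, is $\tau_r$-isc on $(t-\delta,t)$: for open $U$ one has $(\Gamma\cap O)^{-1}(U)=\Gamma^{-1}(U\cap O)$, and intersecting a closure with the open $U$ changes nothing, so $\Psi^{-1}(U)=\Gamma^{-1}(U\cap O)$ is relatively $\tau_r$-open. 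The key reduction is then: it suffices to exhibit a selection $\psi$ of $\Psi$ on a left-neighbourhood of $t$ that has a left limit $y:=\psi(t-)$. Indeed $\psi(s)\in\cl(\Gamma_s\cap O)$ gives $d(\psi(s),\Gamma_s\cap O)=0$, whence $d(y,\Gamma_s\cap O)\le|y-\psi(s)|\to 0$ and $y\in\overrightarrow{\Gamma\cap O}$ at $t$.

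To build such a $\psi$ I would feed in the $\cadlag$ selections supplied by the representation. Michael's theorem in the $\tau_r$-topology, as recalled in the Introduction, already yields a right-continuous selection of the closed convex $\tau_r$-isc map $\Psi$ valued in $\cl O$, so right-continuity comes for free; what is missing is a left limit at $t$. To supply it I would use the $\cadlag$ selections themselves: by Proposition~\ref{labprop:rep} there is a countable family $\{\pathy_\nu\}_{\nu\in\naturals}\subset\pathspace(\Gamma)$ with $\Gamma_s=\cl\{\pathy_\nu(s):\nu\in\naturals\}$, and since $O$ is open with $\Gamma_s\cap O\neq\emptyset$ on $(t-\delta,t)$, for every such $s$ some $\pathy_\nu(s)\in O$. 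Right-continuity of $\pathy_\nu$ together with openness of $O$ makes each $A_\nu:=\{s\in(t-\delta,t):\pathy_\nu(s)\in O\}$ relatively $\tau_r$-open, and the $A_\nu$ cover $(t-\delta,t)$. If some $A_\nu$ contains a whole left-neighbourhood of $t$, then $\psi:=\pathy_\nu$ does the job at once, its left limit $\pathy_\nu(t-)\in\vec\Gamma_t$ being the sought point. Otherwise I would assemble $\psi$ from the pieces $\pathy_\nu|_{A_\nu}$, following one selection while it stays in $O$ and switching at the exit times, using convexity of the $\Gamma_s$ (so that segments joining an already-attained value to a point of $\Gamma_s\cap O$ remain in $\Gamma_s$, and finite convex combinations of $\cadlag$ selections are again $\cadlag$ selections) to keep each switch small and the path inside $\cl O$.

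The main obstacle is precisely the case where no single $A_\nu$ reaches $t$: then the switch times accumulate at $t$ and the pieced-together path could oscillate from the left, so I must guarantee that $\psi$ nonetheless has a left limit at $t$, i.e.\ that the inner limit --- not merely the outer limit along one subsequence --- of $\Gamma_s\cap O$ is nonempty. This is where the standing hypotheses enter decisively: the representation forces $\vec\Gamma$ to have full domain, which excludes the wild left-oscillation of Example~\ref{labEx1}, while convexity of the $\Gamma_s$ and compactness of $\cl O$ are the tools that turn small, summable switches into a Cauchy sequence of values and hence into the required left limit. I expect the delicate point to be making the switch sizes summable, because bounding the jump at an exit time by the distance from the boundary value reached to $\Gamma_s\cap O$ at nearby $s$ is a local instance of the very conclusion being proved; this self-referential step, bridging the outer and inner left limits via convexity and the existence of left limits of the ambient $\cadlag$ selections, is the crux. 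Finally, Remark~\ref{labRemwAssA} warns that $y$ will in general lie on $\partial O$ rather than in $O$, so the whole argument must be carried out in $\cl O$ and cannot be shortened to the stronger (and false) claim $\vec\Gamma_t\cap O\neq\emptyset$.
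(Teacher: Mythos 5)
Your reduction (find a selection of $\cl(\Gamma\cap O)$ on a left neighbourhood of $t$ that possesses a left limit $y$; then $d(y,\Gamma_s\cap O)\to 0$, so $y\in\overrightarrow{\Gamma\cap O}$ at $t$) is sound, but the proof is not complete: you yourself flag the crux --- showing that the path assembled by switching between the $\pathy_\nu$ at exit times that may accumulate at $t$ still has a left limit --- and leave it unresolved. That is a genuine gap, not a routine verification: making the switches summable requires controlling $d(\pathy_\nu(s),\Gamma_s\cap O)$ as $s\upuparrows t$, which, as you observe, is essentially the statement being proved, and neither convexity nor compactness of $\cl O$ supplies that control (Example~\ref{labEx1} shows a bounded convex-valued $\tau_r$-isc map can oscillate without any left inner limit). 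There is also a circularity in your setup: Proposition~\ref{labprop:rep} is the \emph{sufficiency} half of Theorem~\ref{lab:maintheorem1} and takes \thref{ass} as a hypothesis, so it cannot be invoked while proving necessity; all you are entitled to is the raw representation \eqref{eq:MichaelRepresentation}, i.e.\ that for each $s$ and each point of $\Gamma_s\cap O$ there is some $\pathy\in\pathspace(\Gamma)$ passing arbitrarily close (which, since $O$ is open, is all the paper uses).

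The paper's proof sidesteps the accumulating-switches problem by reversing the strategy: rather than following a selection until it leaves $O$ and jumping back in, it produces a \emph{single} $\pathy\in\pathspace(\Gamma)$ with $\pathy(s)\in O$ for all $s\in[t-\delta,t)$. This is done by an exhaustion argument: let $A$ be the set of $z\in(t-\delta,t)$ such that some \cadlag\ selection of $\Gamma$ stays in $O$ on $[t-\delta,z)$. Then $A\ne\emptyset$ (take $\pathy(t-\delta)$ close to a point $y\in\Gamma_{t-\delta}\cap O$ and use right-continuity of $\pathy$), and if $z^*=\sup A<t$ one splices, at the single time $z^*$, a second selection $\mathsf{g}$ that re-enters $O$ near $z^*$, forming $\pathy 1_{[0,z^*)}+\mathsf{g}1_{[z^*,T]}$ and contradicting maximality; hence $z^*=t$. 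For the resulting selection the left limit $\pathy(t-)$ exists \emph{for free} because $\pathy$ is \cadlag, and $\pathy(t-)\in\overrightarrow{\Gamma\cap O}$ at $t$ since $\pathy(s_n)\in\Gamma_{s_n}\cap O$ and $\pathy(s_n)\to\pathy(t-)$ along every $s_n\upuparrows t$; no Cauchy or summability estimate is required. If you wish to salvage your argument, the repair is exactly this change of strategy: do not switch infinitely often near $t$; show instead that one never has to leave $O$ at all.
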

\begin{proof}
Take $t \in (0,T]$ and an open set  $O\subset \reals^d$. Assume that   
$[t-\delta,t) \subset \Gamma^{-1}(O)$ for some  $\delta>0$. For $z \in (t-\delta,t) $ denote by $\pathspace(\Gamma\cap O,z)$ the $\cadlag$ selections $\pathy \in \pathspace(\Gamma)$ with the further property that $\pathy(s) \in O$  for $s \in [t-\delta,z)$. Define the set
\[
A:=\{z \in  (t-\delta,t) \mid  \pathspace(\Gamma \cap O,z) \neq \emptyset \}.
\]
Claim: $A$ is non-empty and for $z^*:=\sup A$ we have $z^*=t$. After the claim there exists a $\cadlag$ function $\pathy \in \pathspace(\Gamma)$ with $\pathy(z) \in \Gamma_z \cap O$ for $z \in [t-\delta,t)$. Then, $\pathy(t-)\in \vec \Theta_t$ where $\Theta=  \Gamma \cap O$ proving the proposition.
	
Now we verify the claim. Take $y \in \Gamma_{t-\delta}\cap O$ and $\epsilon>0$ such that $y + \epsilon \uball \subset O$. Let $\pathy \in D(\Gamma)$ be such that $\pathy(t-\delta) \in \Gamma_{t-\delta}\cap (y + \epsilon \uball)$. There exists $\eta\in (0,\delta)$ such that $\pathy(z) \in y + \epsilon \uball$ for $z \in [t-\delta,t-\delta+\eta)$ since $\pathy$ is right continuous. Then, $\pathy \in D(\Gamma \cap O, t-\delta+\eta)$ showing that $(t-\delta+\eta) \in A$. Thus, $A$ is nonempty. Now assume that $z^*<t$, let $\pathy \in \pathspace(\Gamma\cap O,z^*)$. By the same argument as before we can find a function $\mathsf{g} \in \pathspace(\Gamma)$ with $\mathsf{g}(z) \in \Gamma_z\cap O$ for $z \in [z^*,z^*+\delta')$ and some $\delta'>0$. Thus, $(z^*+\delta')\wedge t \in A$ with function $\pathy 1_{[0,z^*)} + \mathsf{g}1_{[z^*,T]}$, contradicting the definition of $z^*$. Hence $z^*=t$ and the proof is complete.
\end{proof}

\section{Inner-semicontinuity from the left and right} \label{sec:pointscontinuity}
In this section we assume that $\Gamma$ is $\tau_r$-isc and  closed-valued. We show that $\Gamma$ is equal to $\vec \Gamma$ except for a countable set. In particular $\Gamma$, being a $\tau_r$-isc mapping is also isc from both sides, or more precisely $\tau_e$-isc, except for a countable set. This property will allow us to obtain a $\cadlag$ selection which is continuous outside a countable set depending only on $\Gamma$. 

We start with the next lemma showing that  $\Gamma_t$ is a subset of $\vec \Gamma_t$ for $t$ outside a countable set depending only on $\Gamma$.
\begin{lemma}\label{lem:D1}
The set 
\[
D_1:=\{t \in (0,T] \mid \Gamma_t  \not \subset \vec\Gamma_t\}.
\]
is countable.  
\end{lemma}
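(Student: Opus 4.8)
The plan is to exhibit $D_1$ as a countable union of sets, each of which is countable because its points are \emph{right-isolated}. I would index the covering by ordered pairs $(B_1,B_2)$ of open balls with rational centres and radii satisfying $\cl B_1\subseteq B_2$; there are only countably many such pairs. For each such pair set
\[
E_{B_1,B_2}:=\{t\in(0,T]\mid \Gamma_t\cap B_1\neq\emptyset \text{ and } \Gamma_{s_n}\cap B_2=\emptyset \text{ for some strictly increasing } s_n\to t\}.
\]
The two tasks are then to show $D_1\subseteq\bigcup_{(B_1,B_2)}E_{B_1,B_2}$ and that each $E_{B_1,B_2}$ is countable; together these give the claim.

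For the covering, take $t\in D_1$ and a witness $y\in\Gamma_t\setminus\vec\Gamma_t$. Here is where I expect the one genuine subtlety to lie: membership in the inner limit is characterised by $y\in\vec\Gamma_t\iff d(y,\Gamma_s)\to 0$ as $s\upuparrows t$, so $y\notin\vec\Gamma_t$ supplies only a \emph{sequence} $s_n\upuparrows t$ and a $\gamma>0$ with $d(y,\Gamma_{s_n})\geq\gamma$, and no information on a full left neighbourhood. This subsequential nature of the ``miss'' is exactly why a single ball does not suffice and why I record the miss on the larger ball: choosing rational balls with $y\in B_1$ and $\cl B_1\subseteq B_2\subseteq B(y,\gamma)$, the inequality $d(y,\Gamma_{s_n})\geq\gamma$ forces $\Gamma_{s_n}\cap B_2=\emptyset$, while $y\in\Gamma_t\cap B_1$ gives the hit on $B_1$. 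Hence $t\in E_{B_1,B_2}$ and the covering holds.

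For countability of $E:=E_{B_1,B_2}$, I would show every $t\in E$ has an interval $(t,t+\delta)$ disjoint from $E$. Since $\Gamma_t\cap B_1\neq\emptyset$ we have $t\in\Gamma^{-1}(B_1)$, and $\tau_r$-isc makes $\Gamma^{-1}(B_1)$ relatively $\tau_r$-open, so $[t,t+\delta)\cap[0,T]\subseteq\Gamma^{-1}(B_1)\subseteq\Gamma^{-1}(B_2)$ for some $\delta>0$; thus $\Gamma_s\cap B_2\neq\emptyset$ for every $s\in[t,t+\delta)$. If some $t'\in E$ satisfied $t<t'<t+\delta$, then any sequence approaching $t'$ from the left would eventually lie in $(t,t+\delta)$, where $\Gamma_s\cap B_2\neq\emptyset$, contradicting the defining sequence of $t'$ along which $\Gamma_{s_n}\cap B_2=\emptyset$. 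Hence $E\cap(t,t+\delta)=\emptyset$. A set in which every point $t$ admits such a gap $(t,t+\delta)$ disjoint from it is countable (map each point injectively to a rational in its gap), so each $E_{B_1,B_2}$ is countable and $D_1$ is a countable union of countable sets. The decisive role of $\tau_r$-isc is precisely to upgrade the pointwise hit on $B_1$ to a one-sided (right) neighbourhood on $B_2$, which is what converts the merely subsequential miss into genuine right-isolation.
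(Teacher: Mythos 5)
Your proof is correct and follows essentially the same route as the paper's: both cover $D_1$ by countably many sets indexed by a "hit" set and a strictly larger "miss" set (your pairs $B_1\subseteq\cl B_1\subseteq B_2$ play the role of the paper's $A$ and $\cl A$ from a countable base), and both use $\tau_r$-inner semicontinuity to show each piece consists of right-isolated points. The only difference is presentational: you argue directly via the injection into the rationals, whereas the paper packages the same fact as a contradiction (an uncountable piece would contain a right accumulation point of itself, violating $\tau_r$-isc).
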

\begin{proof}
Let $\mathcal{A}$ be a countable family of open sets generating the euclidean topology $\tau_e$ of $\reals^d$.  For all $t \in D_1$ there exists $A_t \in \mathcal{A}$ with $A_t \cap  \Gamma_t  \neq  \emptyset$ and a strictly increasing sequence $z_n \upuparrows t$  such that $\cl A_{t} \cap \Gamma_{z_n} = \emptyset$. We check this claim. By way of contradiction assume that for each $A \in \mathcal{A}$ with $A \cap \Gamma_t \neq \emptyset$  it happens that for each strictly increasing sequence $z_n \upuparrows t$ there exists $n_A \geq 0$ such that for $n \geq n_A$ we have $\cl A \cap \Gamma_{z_n} \neq \emptyset$. Let us check that in this case we have $\Gamma_t \subset \vec \Gamma_t$. To this end, take $y \in \Gamma_t$. For $\epsilon>0$  let $A_\epsilon \in \mathcal{A}$ be such that $y \in A_{\epsilon}$ and $\cl A_{\epsilon} \subset y + \epsilon \uball$. For a sequence $z_n \upuparrows t$ there exists $n_{\epsilon} \geq 0$ such that for $n \geq n_{\epsilon}$ we have $\cl A_{\epsilon} \cap \Gamma_{z_n} \neq \emptyset$. Thus, it is possible to construct a sequence $\{y_n\}_{n \in \naturals}$ converging to $y$ with $y_n \in \Gamma_{z_n}$. Hence $y \in \liminf_{n} \Gamma_{z_n}$. It follows that $y \in  \vec  \Gamma_{t}$ since the sequence was arbitrary.

Now assume for a contradiction that  $D_1$ is uncountable. Then, there exists an infinite subset  $D_0 \subset D_1$ and $\tilde A \in \mathcal{A}$ such that $A_t=\tilde A$  for all $t \in D_0$. Moreover, there exists a point $t_0 \in D_0$ and a strictly decreasing sequence $\{t_n\}_{n \in \naturals} \subset D_0$ converging from the right to $t_0$, since $D_1$ is uncountable. To verify the latter elementary fact, if there are no right limit points of $D_1$ then for each $t \in D_1\setminus \{T\}$ there exists $\delta_t>0$ such that $(t,t+\delta_t) \subset [0,T]$ and $(t,t+\delta_t) \cap D_1 =\emptyset$. This produces a summable uncountable series of strictly positive numbers which is impossible.

As a consequence, for each $n$ there exists a strictly increasing sequence $\{z_{n,k}\}_{k\in \naturals}$ converging  to $t_n$ with $\cl \tilde A \cap  \Gamma_{z_{n,k}} = \emptyset$, $\tilde A \cap \Gamma_{t_n} \neq \emptyset$ and $t_{n+1}<z_{n,k}<t_{n}$. Hence $z_{n,1} \downdownarrows t_0$ while $\cl \tilde A \cap \Gamma_{z_{n,1}} = \emptyset$ and $\tilde A \cap \Gamma_{t_0}\neq \emptyset$, in contradiction to $\Gamma$ being $\tau_r$-isc. 
\end{proof} 

The next lemma has similar proof to Lemma \ref{lem:D1} and can be skipped.
\begin{lemma}\label{lem:D2}
The set  $\{t \in (0,T] \mid \vec\Gamma_t \not \subset  \Gamma_t\}$ is countable.  
\end{lemma}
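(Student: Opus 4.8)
The plan is to mirror the proof of Lemma \ref{lem:D1}, reversing the roles of left and right and flipping the two containments. Write $D_2:=\{t\in(0,T]\mid \vec\Gamma_t\not\subset\Gamma_t\}$ and fix, as in Lemma \ref{lem:D1}, a countable family $\mathcal{A}$ of open sets generating $\tau_e$ with the property that every point admits arbitrarily small members of $\mathcal{A}$ whose closure is a small ball around it. For $t\in D_2$ I choose $y_t\in\vec\Gamma_t\setminus\Gamma_t$; since $\Gamma_t$ is closed I may pick $A_t\in\mathcal{A}$ with $y_t\in A_t$ and $\cl A_t\cap\Gamma_t=\emptyset$. The key local fact, dual to the one used in Lemma \ref{lem:D1}, is that the membership $y_t\in\vec\Gamma_t$ forces $\Gamma$ to meet $A_t$ on a whole left-neighbourhood: there is $\delta>0$ with $(t-\delta,t)\subset\Gamma^{-1}(A_t)$. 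This is exactly Lemma \ref{lem:Elementary1} applied to $A_t$; alternatively, since full domain of $\vec\Gamma$ is not a standing hypothesis here, a one-line direct argument works: if no such $\delta$ existed I could extract a strictly increasing sequence $s_n\upuparrows t$ with $A_t\cap\Gamma_{s_n}=\emptyset$, contradicting $y_t\in\liminf_n\Gamma_{s_n}$.

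Next I argue by contradiction, assuming $D_2$ uncountable. Since $\mathcal{A}$ is countable, some fixed $\tilde A\in\mathcal{A}$ satisfies $A_t=\tilde A$ for all $t$ in an uncountable set $D_0\subset D_2$. By the same elementary counting argument as in Lemma \ref{lem:D1}, but applied from the left, an uncountable subset of $[0,T]$ must contain a point that is the limit of a strictly increasing sequence drawn from the subset; hence there are $t_0\in D_0$ and $t_n\in D_0$ with $t_n\upuparrows t_0$. Indeed, if $D_0$ had no such left limit point, then each of its points would carry a left interval $(t-\delta_t,t)$ disjoint from $D_0$, producing an uncountable summable family of strictly positive lengths, which is impossible.

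Finally I combine the two ingredients at $t_0$. On one hand $t_0\in D_0$ gives, by the local fact above, a $\delta>0$ with $(t_0-\delta,t_0)\subset\Gamma^{-1}(\tilde A)$, so $\tilde A\cap\Gamma_{t_n}\neq\emptyset$ for all large $n$. On the other hand each $t_n\in D_0$ satisfies $\cl\tilde A\cap\Gamma_{t_n}=\emptyset$, whence $\tilde A\cap\Gamma_{t_n}=\emptyset$. These two statements contradict each other, so $D_2$ must be countable. I expect the only genuine subtlety to be orientation: in Lemma \ref{lem:D1} the contradiction is extracted from a right limit point and relies on $\tau_r$-isc propagating ``$\Gamma$ meets $\tilde A$'' rightward, whereas here the contradiction is extracted from a left limit point and relies instead on the definition of $\vec\Gamma$ forcing ``$\Gamma$ meets $\tilde A$'' on an entire left-neighbourhood; getting this mirror exactly right, and in particular noticing that the final step rests on the structure of $\vec\Gamma$ rather than on $\tau_r$-isc, is the main thing to watch.
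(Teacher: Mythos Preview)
Your proof is correct and follows essentially the same route as the paper's: pick $A_t\in\mathcal A$ separating $\vec\Gamma_t$ from $\Gamma_t$, pass to a single $\tilde A$ on an uncountable $D_0$, extract a left limit point $t_0$ of $D_0$, and derive the contradiction from the fact that $\tilde A\cap\vec\Gamma_{t_0}\ne\emptyset$ forces $\tilde A\cap\Gamma_{t_n}\ne\emptyset$ for large $n$ while $\cl\tilde A\cap\Gamma_{t_n}=\emptyset$. Your extra care in giving the direct argument (rather than invoking Lemma~\ref{lem:Elementary1}, whose full-domain hypothesis is not assumed in this section) and in justifying the existence of a left limit point is welcome detail the paper omits.
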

\begin{proof}
Let $\mathcal{A}$ be a countable family of open sets generating the euclidean topology of $\reals^d$.  For all $t$ in the set  $\{t \in (0,T] \mid \vec\Gamma_t \not \subset \Gamma_t\}$  there exists $A_t \in \mathcal{A}$ with $A_t \cap  \vec\Gamma_t  \neq  \emptyset$ while $\cl A_t \cap \Gamma_t=\emptyset$. Assume for a contradiction that the set $\{t \in (0,T] \mid \vec\Gamma_t \not \subset \Gamma_t\}$  is uncountable. Then there exists an infinite subset $D_0$ and $\tilde A \in \mathcal{A}$ such that for all $t \in D_0$ we have $A_t=\tilde A$ and moreover there exists $t_0 \in D_0$ that is approximated from the left by an increasing sequence $\{t_n\}_{n \in \naturals} \subset D_0$. Then, for $n$ large enough we have $\tilde A  \cap \Gamma_{t_n} \neq \emptyset$ since $\tilde A \cap \vec\Gamma_{t_0}  \neq \emptyset$. This is a  clear contradiction with the properties of $\tilde A$ which is equal to $A_t$ for $t \in D_0$.
\end{proof}

The following is an immediate consequence of the above two lemmas.
\begin{corollary}\label{lem:D3}
The set  $\{t \in (0,T] \mid \vec\Gamma(t) \neq \Gamma(t) \}$ 
is countable.  
\end{corollary}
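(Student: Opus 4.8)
The plan is to reduce the statement to a purely set-theoretic decomposition and then invoke the two preceding lemmas directly, since the corollary asserts nothing beyond combining them. First I would record the elementary observation that, for any fixed $t$, the two sets $\Gamma_t$ and $\vec\Gamma_t$ fail to coincide precisely when at least one of the two inclusions between them fails. In symbols,
\[
\{t \in (0,T] \mid \vec\Gamma_t \neq \Gamma_t\}
=
\{t \in (0,T] \mid \Gamma_t \not\subset \vec\Gamma_t\}
\cup
\{t \in (0,T] \mid \vec\Gamma_t \not\subset \Gamma_t\}.
\]
Here I am using that $\Gamma_t = \vec\Gamma_t$ holds if and only if both $\Gamma_t \subseteq \vec\Gamma_t$ and $\vec\Gamma_t \subseteq \Gamma_t$ hold, so the negation is the disjunction of the two failed inclusions.

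Next I would identify the two sets on the right-hand side with the objects already controlled. The first set is exactly $D_1$, shown to be countable in Lemma \ref{lem:D1}, and the second is shown to be countable in Lemma \ref{lem:D2}. Since a finite (here, two-fold) union of countable sets is countable, the left-hand side is countable, which is the assertion of the corollary. There is no genuine obstacle in this argument: all the work has been carried out in establishing the countability of the two one-sided defect sets, and the only additional ingredient is the trivial decomposition of a set-equality failure into two inclusion failures.
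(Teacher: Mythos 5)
Your proof is correct and is exactly the paper's argument: the paper states the corollary as an immediate consequence of Lemmas \ref{lem:D1} and \ref{lem:D2}, which is precisely your decomposition of the equality failure into the two inclusion failures followed by the countable-union observation.
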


\section{Sufficiency of the $\cadlag$ representation} \label{sec:cadlagselectionrepresentation}
The domain of $\vec \Gamma$ includes $(0,T]\setminus D_1$ which is the complement of a countable set by Lemma \ref{lem:D1}. In the next lemma we assume that $\vec \Gamma$ has full domain  and  show that it is already strong enough for  $\epsilon$-selections. Note that we do not require $\Gamma$ to be closed-valued.
\begin{lemma}\label{labLemma1}
Assume $\Gamma$ is convex valued and $\tau_r$-isc, and that $\vec \Gamma$ has full domain. Then, for each $\epsilon>0$ there exists a $\cadlag$ selection of $\Gamma + \epsilon \uball$ that is  continuous on $[0,T]\setminus D_1$.
\end{lemma}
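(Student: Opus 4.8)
The plan is to build the selection of the fattened mapping $\Theta := \Gamma + \epsilon \uball$ by a single left-to-right construction, using $\tau_r$-isc to propagate a value to the right and the nonemptiness of $\vec\Gamma$ to create the left limits. The structural input comes from Section \ref{sec:pointscontinuity}: for $t \notin D_1$ one has $\Gamma_t \subset \vec\Gamma_t$ directly from the definition of $D_1$, so $\Theta$ is inner-semicontinuous from both sides there, while $D_1$ is countable by Lemma \ref{lem:D1}. Hence it is enough to exhibit a $\cadlag$ selection of $\Theta$ whose discontinuities all lie in $D_1$.

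The construction rests on two elementary moves, each producing a constant piece that stays inside $\Theta$. Move to the right: if $v \in \Gamma_s + (\epsilon/2)\uball$, then $s \in \Gamma^{-1}(v + (\epsilon/2)\uball)$, and $\tau_r$-isc produces $\delta > 0$ with $v \in \Gamma_u + (\epsilon/2)\uball \subset \Theta_u$ for all $u \in [s, s+\delta)$; the constant path $\equiv v$ is thus an $\epsilon$-selection on a right half-open interval, which is the source of right-continuity. Move to the left at a point $t$: for any target $w \in \vec\Gamma_t$ apply Lemma \ref{lem:Elementary1} with $O = w + (\epsilon/2)\uball$ to get $\delta > 0$ with $w \in \Gamma_s + (\epsilon/2)\uball \subset \Theta_s$ for all $s \in (t-\delta, t)$; the constant path $\equiv w$ on this interval stays in $\Theta$ and has left limit $w$ at $t$. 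When $t \notin D_1$ I pick the target $w$ inside $\Gamma_t$ itself, which is possible because $\Gamma_t \subset \vec\Gamma_t$, and set the value at $t$ equal to $w$, so that the left limit and the value agree and the path is continuous at $t$; at the countably many points of $D_1$ a jump is allowed.

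I would glue these moves into a global object by a maximal-extension (transfinite) argument in the spirit of Proposition \ref{labprop:neccesaryA}. One considers $\cadlag$ $\epsilon$-selections defined on initial segments $[0,s]$, continuous off $D_1$ and with a genuine value $\pathy(s) \in \Gamma_s$ at the right endpoint, ordered by extension. The right move shows any such segment with $s < T$ can be pushed strictly to the right, so a maximal segment must reach $T$; the only stage where definedness is not automatic is at a supremum $z^*$ of a chain of segments, where the value at $z^*$ has to be taken as a left limit. Its existence is exactly what the left move secures from $\vec\Gamma_{z^*} \neq \emptyset$, and the recursion then continues past $z^*$ --- continuously if $z^* \notin D_1$ and with a controlled jump otherwise.

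The main obstacle is precisely this passage to a limit from the left: a purely greedy extension could produce values $v_1, v_2, \dots$ changing infinitely often as $s \upuparrows z^*$ with no limit, so that no left limit exists and the result is merely right-continuous rather than $\cadlag$. The heart of the argument is therefore to arrange the recursion so that on approach to each such $z^*$ the path is actively held toward a single preselected target $w \in \vec\Gamma_{z^*}$ rather than extended arbitrarily; the left move makes this feasible inside $\Theta$, and the $\epsilon$-slack is what lets the constant steer remain in the fattening while the sets $\Gamma_s$ themselves only approach $w$. Guaranteeing this left-hand behaviour simultaneously at every accumulation point --- equivalently, proving that the constructed function is genuinely regulated --- is the crux, and it is here that $\vec\Gamma$ having full domain, together with the countability of $D_1$ (Lemma \ref{lem:D1}) confining the genuine jumps, does the essential work.
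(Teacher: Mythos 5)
Your two local moves are exactly the ones the paper uses: for $t\notin D_1$ a point $y_t\in\Gamma_t\cap\vec\Gamma_t$ gives a constant local selection of $\Gamma+\epsilon\uball$ on a two-sided interval (right side by $\tau_r$-isc, left side by Lemma~\ref{lem:Elementary1}), and for $t\in D_1$ one takes a two-piece constant function with left value in $\vec\Gamma_t$ and right value in $\Gamma_t$. The difference, and the problem, is in the passage from local to global. You propose a greedy/maximal-extension recursion from left to right, and you yourself isolate its failure mode --- at an accumulation point $z^*$ of the extension steps the partial selection need not have a left limit --- but you do not resolve it; you only assert that steering toward a ``preselected target'' $w\in\vec\Gamma_{z^*}$ and the countability of $D_1$ ``do the essential work.'' This is a genuine gap, not a routine detail: the accumulation points $z^*$ are not known in advance (they depend on the values and step lengths you have already chosen), the set of such points can itself accumulate, so the recursion would have to run through limit stages of arbitrary countable ordinal type and face the same left-limit problem at each one, and nothing in the setup singles out a target to steer toward before $z^*$ has been identified. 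Countability of $D_1$ does not help here, since the troublesome accumulation points are created by the construction and need not lie in $D_1$ at all. A further warning sign is that your argument never uses convexity of $\Gamma$, which is a hypothesis of the lemma and is genuinely needed.

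The paper avoids all of this with the standard Michael-type gluing: the intervals $I_t$ carrying the local selections $\pathy^t$ form an open cover of the compact set $[0,T]$ in the euclidean topology, one takes a $\tau_e$-continuous partition of unity $\{p_i\}$ subordinate to a locally finite subcover and sets $\pathy=\sum_i p_i\,\pathy^{t_i}$. Convexity of $\Gamma_z+\epsilon\uball$ makes this convex combination again an $\epsilon$-selection, and since near any point only finitely many local pieces are active --- each piecewise constant with at most one jump, located in $D_1$ --- the glued function is automatically \cadlag\ and continuous off $D_1$; the existence of left limits is inherited from the finitely many summands rather than having to be engineered along a recursion. If you want to salvage your approach you would need to supply the missing mechanism that forces left limits at every accumulation point of the extension process; as written, the proof is incomplete precisely at the step you flag as the crux.
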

\begin{proof}
For $t \in (0,T)\setminus D_1$, take $y_t \in \Gamma_t \cap \vec \Gamma_t$. There exists $\delta>0$ such that $(t-\delta,t+\delta) \subset (0,T)$ and $(t-\delta,t+\delta) \subset \Gamma^{-1}(y_t + \epsilon \uball)$ due to the fact that $\Gamma$ is $\tau_r$-isc and also by Lemma \ref{lem:Elementary1}  since  $t\notin D_1$. Hence, the function $\pathy^t:=y_t 1_{(t-\delta,t+\delta)}$ is a local continuous selection of $\Gamma + \epsilon \uball$.

For $t  \in D_1\setminus \{0,T\}$  there exists  $y^r_t \in \Gamma_t$  since $\Gamma$ has full domain  and there exists $\delta_1>0$ such that  $[t,t+\delta_1) \subset (0,T)$
and $y^r_t \in \Gamma(z) + \epsilon \uball$ for $z \in [t,t+\delta_1)$ since $\Gamma$  is $\tau_r$-isc.  Considering that $\vec \Gamma$ has full domain take $y^l_t \in \vec \Gamma_t$. There exists $\delta_2>0$ such that $(t-\delta_2,t) \subset  \Gamma^{-1}(y^l_t + \epsilon \uball)\cap (0,T)$ due to Lemma \ref{lem:Elementary1}. The function $\pathy^t:=y^l_t 1_{(t-\delta_2,t)} + y^r_t 1_{[t,t+\delta_1)}$ is a local $\cadlag$ selection of $\Gamma + \epsilon \uball$.

For $t=0$ we can construct by similar arguments a local continuous selection. For $t=T$ a local selection exists that will be continuous or $\cadlag$ according to $T \notin D_1$ or $T \in D_1$.

The constructed intervals define an open covering (in the relative euclidean topology $\tau_e$) of the interval $[0,T]$. There exists a $\tau_e$-continuous partition of unity subordinated to a locally finite subcovering from which a global $\cadlag$ selection is generated with the required property of continuity outside $D_1$.
\end{proof}

Lemma \ref{labLemma1} provides $\epsilon$-selections. For selections we require the stronger condition, \thref{ass}. We need a preliminary result.
\begin{lemma}\label{lab:Lemreg1}
Assume $\Gamma$ is $\tau_r$-isc.  Let $\mathsf{g}$ be a $\cadlag$ selection of $\Gamma + \epsilon \uball$ which is continuous in $[0,T]\setminus D_1$. Let $\phi:=\Gamma \cap (\mathsf{g} + \epsilon \uball)$. Take $t \in (0,T]\setminus D_1$. Then for each $y \in \phi_t$ and $\eta>0$ there exists $\delta=\delta(y,\eta)>0$ such that $t-\delta>0$ and 
\[
y \in (\Gamma_z+\eta \uball) \cap (\mathsf{g}(z) + \epsilon \uball), \mbox{ for } z \in [t-\delta,t).
\]
\end{lemma}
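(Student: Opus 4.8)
The plan is to prove two statements packed into the claim: first, that $y\in\Gamma_z+\eta\uball$ for $z$ in a left-neighborhood of $t$, and second that $y\in\mathsf{g}(z)+\epsilon\uball$ for such $z$. The second containment should be the easier one and I would dispose of it first. Since $y\in\phi_t=\Gamma_t\cap(\mathsf g(t)+\epsilon\uball)$, we have $|y-\mathsf g(t)|<\epsilon$, so there is room $\rho>0$ with $y\in\mathsf g(t)+(\epsilon-\rho)\uball$. Because $t\notin D_1$ and $\mathsf g$ is by hypothesis continuous on $[0,T]\setminus D_1$, the map $z\mapsto\mathsf g(z)$ is continuous at $t$, hence for $z$ close enough to $t$ (from either side) we keep $|\mathsf g(z)-\mathsf g(t)|<\rho$ and therefore $y\in\mathsf g(z)+\epsilon\uball$. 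This fixes one threshold $\delta'>0$.

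For the first containment I would use inner-semicontinuity of $\Gamma$ together with the fact that $t\notin D_1$. The point is that $y\in\Gamma_t$ and, since $t\notin D_1$, we have $\Gamma_t\subseteq\vec\Gamma_t$, so $y\in\vec\Gamma_t$ as well. Now I would argue that $y\in\vec\Gamma_t$ forces approximability of $y$ by points of $\Gamma_z$ as $z\uparrow t$: concretely, I claim that for the fixed $\eta>0$ there is $\delta''>0$ with $t-\delta''>0$ and $\Gamma_z\cap(y+\eta\uball)\neq\emptyset$ for all $z\in[t-\delta'',t)$. The cleanest route is Lemma~\ref{lem:Elementary1}: apply it to the open set $O=y+\eta\uball$. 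Since $y\in\vec\Gamma_t\cap O$ we have $t\in\vec\Gamma^{-1}(O)\cap(0,T]$, so the lemma yields $\delta''>0$ with $(t-\delta'',t)\subset\Gamma^{-1}(O)$, which is exactly the assertion that $\Gamma_z\cap(y+\eta\uball)\neq\emptyset$, i.e.\ $y\in\Gamma_z+\eta\uball$, for every such $z$.

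Finally I would set $\delta:=\min\{\delta',\delta''\}$ (shrinking further if needed so that $t-\delta>0$), and on $[t-\delta,t)$ both containments hold simultaneously, giving $y\in(\Gamma_z+\eta\uball)\cap(\mathsf g(z)+\epsilon\uball)$ as required. I expect the main subtlety to be the first containment, and specifically the clean invocation of the left-limit structure: the definition of $\vec\Gamma_t$ is an intersection of $\liminf$'s over \emph{all} increasing sequences $t_n\uparrow t$, so membership $y\in\vec\Gamma_t$ only directly gives approximating sequences, not a uniform left-neighborhood statement. The value of Lemma~\ref{lem:Elementary1} is precisely that it converts this sequential $\liminf$ information into the uniform interval conclusion $(t-\delta'',t)\subset\Gamma^{-1}(O)$ via a contradiction argument, so I would lean on it rather than re-derive the neighborhood by hand; the role of the hypothesis $t\notin D_1$ is only to guarantee $y\in\vec\Gamma_t$ so that the lemma becomes applicable.
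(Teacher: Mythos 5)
Your proposal is correct and follows essentially the same route as the paper's proof: the second containment via continuity of $\mathsf{g}$ at $t\notin D_1$ plus the triangle inequality with a margin $\rho$, and the first containment by noting $y\in\vec\Gamma_t$ (since $t\notin D_1$) and invoking Lemma~\ref{lem:Elementary1} with $O=y+\eta\uball$. The only slip is in your closing remark that $t\notin D_1$ serves \emph{only} to put $y$ in $\vec\Gamma_t$ — as your own first paragraph shows, it is also what makes $\mathsf{g}$ continuous (not merely $\cadlag$) at $t$, which the second containment needs.
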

\begin{proof}
Take $t \notin D_1$ with $t>0$, and $y \in \phi_t$. Let $\epsilon' \in (0,\epsilon)$ be such that  $\left| y-\mathsf{g}(t)\right| <\epsilon'$.
For $\rho:=(\epsilon-\epsilon')$ let $\delta_1>0$ be such that $t-\delta_1>0$ and for $z \in [t-\delta_1,t)$ we have $\left|\mathsf{g}(z)-\mathsf{g}(t)\right|<\rho$. There exists $\delta_2 \in (0,\delta_1)$  such that $[t-\delta_2,t) \subset \Gamma^{-1}(y+\eta \uball)$ due to Lemma \ref{lem:Elementary1}, since $y \in \vec \Gamma_t$.

For $z \in [t-\delta_2,t)$ let $r_1=y-\mathsf{g}(t) \in \epsilon' \uball$ and $r_2=\mathsf{g}(t)-\mathsf{g}(z)\in \rho \uball$. Then $y-r_2=\mathsf{g}(z)+r_1$ showing that
\[
y \in (\Gamma_z+\eta \uball) \cap (\mathsf{g}(z) + \epsilon \uball),
\]
which completes the proof.
\end{proof}

The next result is a Michael selection theorem for $\cadlag$ functions. Its proof proceeds by induction just like the proof  \cite[Theorem 3.2'']{mic56}, which is the original Michael selection theorem. 
\begin{theorem}\label{lab:maintheorem2}
Assume that $\Gamma$ is a $\tau_r$-isc convex-valued mapping with full domain, and that $\vec \Gamma$ has a full domain. Then, under \thref{ass}, there exists a $\cadlag$ selection of $\cl \Gamma$ which is continuous on $[0,T] \setminus D_1$.
\end{theorem}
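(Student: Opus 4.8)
The plan is to mimic the inductive successive-approximation scheme of Michael's original proof \cite[Theorem 3.2'']{mic56}, with the roles of lower semicontinuity and closedness taken over by the two lemmas just established: Lemma \ref{labLemma1} produces, at every scale, a global $\cadlag$ $\epsilon$-selection that is continuous off $D_1$, while Lemma \ref{lab:Lemreg1} guarantees that intersecting $\Gamma$ with a shrinking tube around such a selection retains enough left-regularity to be fed back into Lemma \ref{labLemma1}. Concretely, I would fix a summable sequence $\epsilon_n:=2^{-n}\epsilon_0$ and build $\cadlag$ selections $\mathsf{g}_n$ of $\Gamma+\epsilon_n\uball$, each continuous on $[0,T]\setminus D_1$, satisfying the Cauchy-type bound $|\mathsf{g}_{n+1}(t)-\mathsf{g}_n(t)|<\epsilon_n+\epsilon_{n+1}$ for every $t$.

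The base case $\mathsf{g}_0$ is delivered directly by Lemma \ref{labLemma1}. For the inductive step, given $\mathsf{g}_n$ I set
\[
\phi^n:=\Gamma\cap(\mathsf{g}_n+\epsilon_n\uball),
\]
which is convex-valued and of full domain because $\mathsf{g}_n$ is an $\epsilon_n$-selection. It is also $\tau_r$-isc: this is routine, using right-continuity of $\mathsf{g}_n$ to keep a witness $y\in\Gamma_{t_0}\cap(\mathsf{g}_n(t_0)+\epsilon_n\uball)\cap O$ strictly inside the open tube on a right-neighborhood $[t_0,t_0+\delta)$, combined with the $\tau_r$-iscness of $\Gamma$. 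Once $\phi^n$ is known to be $\tau_r$-isc with $\vec{\phi^n}$ of full domain, I apply Lemma \ref{labLemma1} to $\phi^n$ at scale $\epsilon_{n+1}$ to obtain $\mathsf{g}_{n+1}$, a $\cadlag$ selection of $\phi^n+\epsilon_{n+1}\uball\subset(\mathsf{g}_n+\epsilon_n\uball)+\epsilon_{n+1}\uball$; since $\phi^n_t\subset\Gamma_t$, this $\mathsf{g}_{n+1}$ is an $\epsilon_{n+1}$-selection of $\Gamma$ and obeys the required closeness bound.

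The crux, and the step I expect to be the main obstacle, is verifying that $\vec{\phi^n}$ has full domain and that the analogous set $D_1(\phi^n):=\{t\mid \phi^n_t\not\subset\vec{\phi^n}_t\}$ is contained in $D_1$, so that the continuity set is preserved along the induction. For $t\in(0,T]\setminus D_1$ this is precisely the content of Lemma \ref{lab:Lemreg1}: every $y\in\phi^n_t$ is approximable from the left by elements of $\phi^n$, hence lies in $\vec{\phi^n}_t$, which gives both $\vec{\phi^n}_t\neq\emptyset$ and $\phi^n_t\subset\vec{\phi^n}_t$, i.e.\ $t\notin D_1(\phi^n)$. The delicate points are $t\in D_1$, where $\mathsf{g}_n$ may jump; here I would invoke \thref{ass}. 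Writing $\mathsf{g}_n(t-)$ for the left limit, the full domain of $\phi^n$ forces $(t-\delta,t)\subset\Gamma^{-1}(\mathsf{g}_n(t-)+\epsilon_n'\uball)$ for a slightly enlarged radius $\epsilon_n'>\epsilon_n$, and \thref{ass} then yields $\overrightarrow{\Gamma\cap(\mathsf{g}_n(t-)+\epsilon_n'\uball)}_t\neq\emptyset$, producing a left-limit candidate for $\vec{\phi^n}_t$. One must absorb here the open/closed-ball slack flagged in Remark \ref{labRemwAssA} — Assumption A only delivers left limits in the closed tube — which is harmless precisely because the next selection is taken for the $\epsilon_{n+1}$-fattening.

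With the induction complete, the bound $|\mathsf{g}_{n+1}-\mathsf{g}_n|<\epsilon_n+\epsilon_{n+1}$ makes $(\mathsf{g}_n)$ uniformly Cauchy. Its uniform limit $\pathy$ is again $\cadlag$ and continuous on $[0,T]\setminus D_1$, both properties being closed under uniform convergence, and since $d(\mathsf{g}_n(t),\Gamma_t)<\epsilon_n\to 0$ one obtains $\pathy(t)\in\cl\Gamma_t$ for every $t$. Hence $\pathy$ is the desired $\cadlag$ selection of $\cl\Gamma$ continuous on $[0,T]\setminus D_1$.
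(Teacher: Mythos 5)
Your overall architecture is the paper's: a Michael-type successive approximation, intersecting $\Gamma$ with shrinking tubes around the previous selection, using Lemma \ref{lab:Lemreg1} at points off $D_1$ and \thref{ass} at points of $D_1$, and passing to the uniform limit. However, there is a genuine gap in the inductive step. You propose to obtain $\mathsf{g}_{n+1}$ by applying Lemma \ref{labLemma1} to $\phi^n:=\Gamma\cap(\mathsf{g}_n+\epsilon_n\uball)$, and that lemma \emph{requires as a hypothesis} that $\vec{\phi^n}$ have full domain. You never establish this at points $t\in D_1$, and your own analysis shows why you cannot: \thref{ass} applied to the enlarged tube $O_1=\mathsf{g}_n(t-)+\epsilon_n'\uball$ with $\epsilon_n'>\epsilon_n$ only produces a point of $\overrightarrow{\Gamma\cap O_1}_t$, which is a left-limit point of a \emph{different} mapping than $\phi^n$ (its approximating points lie in $\mathsf{g}_n(z)+(\epsilon_n'+\eta)\uball$, not in $\mathsf{g}_n(z)+\epsilon_n\uball$). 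The remark that the slack is ``harmless because the next selection is taken for the $\epsilon_{n+1}$-fattening'' does not repair this: the fattening appears in the \emph{conclusion} of Lemma \ref{labLemma1}, not in its hypothesis, so you cannot invoke the lemma before $\dom\vec{\phi^n}=[0,T]$ is secured. A related, smaller issue arises off $D_1$: Lemma \ref{lab:Lemreg1} as stated only places $y$ in the fattened set $(\Gamma_z+\eta\uball)\cap(\mathsf{g}_n(z)+\epsilon_n\uball)$; concluding $y\in\vec{\phi^n}_t$ requires the extra (fixable) observation that the witness $\gamma_z\in\Gamma_z\cap(y+\eta\uball)$ also lies in $\mathsf{g}_n(z)+\epsilon_n\uball$ once $\eta$ is small relative to the strict slack $\epsilon_n-|y-\mathsf{g}_n(t)|$.

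The paper resolves exactly this difficulty by \emph{not} feeding $\Gamma^{k+1}=\Gamma\cap(\pathy_k+\epsilon_k\uball)$ back into Lemma \ref{labLemma1}. Instead it redoes the local construction at each inductive step: at $t\in D_1$ it takes $y^l_t\in\overrightarrow{\Gamma\cap O_1}_t$ with $O_1=\pathy_k(t-)+(\epsilon_k+\eta)\uball$, and only claims the fattened memberships $y^l_t\in(\Gamma_z+\epsilon_{k+1}\uball)\cap(\pathy_k(z)+2\epsilon_k\uball)$ on a left neighborhood; the enlargement of the tube is absorbed into the weakened closeness invariant $\pathy_{k+1}\in\pathy_k+2\epsilon_k\uball$ (note the factor $2$, versus your claimed bound $\epsilon_n+\epsilon_{n+1}$), which is still summable. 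To complete your proof you would either have to carry out this direct local construction, or reformulate $\phi^n$ with a suitably enlarged tube so that its left-limit mapping provably has full domain, and then retrace the constants; as written, the induction does not close.
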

\begin{proof}
Let $\epsilon_0=1$ and 	$\epsilon_i:=\frac{1}{2^i} \epsilon_{i-1}$. By Lemma~\ref{labLemma1}, there exists a $\cadlag$ selection $\pathy_1$ of $\Gamma + \epsilon_1 \uball$ continuous outside $D_1$. Assume $\pathy_1,\ldots,\pathy_k$ have been constructed with the following properties:
\begin{align*}
&(a)\ \pathy_i \mbox{ is a selection of } \pathy_{i-1} + 2 \epsilon_{i-1}\uball,\quad i=2,3,\ldots,k,\\
&(b)\ \pathy_i \mbox{ is a selection of } \Gamma + \epsilon_{i} \uball,\\
&(c)\ \pathy_i \mbox{ is continuous outside } D_1.
\end{align*}
Now we construct a function satisfying (a)-(c) for $i=k+1$. This will produce a sequence $\{\pathy_i\}_{i \in \naturals}$ converging under the uniform norm. Hence, it converges to a $\cadlag$ function $\pathy \in \pathspace$ which is continuous outside $D_1$. The function $\pathy$ is then a  selection of $\cl \Gamma$.

Let $\Gamma^{k+1}:=\Gamma \cap (\pathy_k + \epsilon_k \uball)$. The mapping $\Gamma^{k+1}$ is $\tau_r$-isc since it is the intersection of $\tau_r$-isc mappings; see Lemma \ref{lemma:iscStab}. It is clearly convex valued.  Moreover, $\dom \Gamma^{k+1}=[0,T]$ since $\pathy_{k}$ is a selection of $\Gamma + \epsilon_{k} \uball$.

Take $t \in (0,T) \setminus D_1$  and $y_t \in  \Gamma_{t}^{k+1}$. Let $\eta:=\frac{1}{2}\epsilon_{k+1}$. There exists $\delta_1>0$ such that $t-\delta_1>0$ and $y_t \in  (\Gamma_{z} + \eta \uball)\cap (\pathy_k(z) + \epsilon_k \uball) $ for $z \in (t-\delta_1,t)$, due to Lemma \ref{lab:Lemreg1}. There exists $\delta_2>0$ such that $t+\delta_2<T$ and $(y_t + \eta \uball)\cap \Gamma_z^{k+1}\neq \emptyset$ for $z \in [t,t+\delta_2)$, since $\Gamma^{k+1}$ is $\tau_r$-isc. Hence, for  $I_t:=(t-\delta_1,t+\delta_2)$ we have $I_t \subset (0,T)$ and  for $z \in I_t$ there exist $\gamma_z \in \Gamma_z$, $r_1, r_3 \in \eta \uball$, $r_2 \in \epsilon_k$  such that 
\[
\gamma_z + r_1=\pathy_k(z) + r_2=y_t + r_3.
\]
Thus, $y_t \in (\Gamma_z +  \epsilon_{k+1} \uball) \cap (\pathy_k(z)+2 \epsilon_{k} \uball)$. As a consequence, the function $\pathy^t_{k+1}:=y_t 1_{I_t}$ is a local continuous selection of $\Gamma + \epsilon_{k+1} \uball$ and $\pathy_k + 2 \epsilon_k \uball$.

Now take $t\in D_1 \cap (0,T)$. Let $\eta:=\frac{1}{3}\epsilon_{k+1}$ and $O_1:=\pathy_k(t-)+(\epsilon_{k} + \eta) \uball$. There exists $\delta_1>0$ such that $t-\delta_1>0$ and  $(t-\delta_1,t) \subset \Gamma^{-1}(O_1)$ since $\pathy_k$ is a selection of $\Gamma + \epsilon_k \uball$. Hence, there exists $y^l_t \in \overrightarrow{\Gamma_t \cap O_1}$ since $\Gamma$ satisfies \thref{ass}. 
There exists $\delta_2<\delta_1$ such that for $z \in (t-\delta_2,t)$ we have 
\[
\Gamma_z \cap O_1 \cap (y^l_t + \eta \uball) \neq \emptyset,
\]
due to Lemma \ref{lem:Elementary1} and we can select $\delta_2$ so that $|\pathy_k(t-)-\pathy_k(z)|<\eta$ for $z \in (t-\delta_2,t)$. Thus,  $y^l_t \in (\Gamma_z + \epsilon_{k+1} \uball) \cap (\pathy_k(z) + 2 \epsilon_{k} \uball)$ for $z \in (t-\delta_2,t)$. 
Take $y_t^r \in \Gamma^{k+1}_t$. There exists $\delta_3>0$ such that $t+\delta_3 \leq T$ and  $[t,t+\delta_3) \subset (\Gamma^{k+1})^{-1}(y_t^r + \eta  \uball)$ due to the fact that $\Gamma^{k+1}$ is $\tau_r$-isc. 
Let $I_t:=(t-\delta_2,t+\delta_3)$ and define a function $\pathy^t_{k+1}$ on $I_t$ by $\pathy^t_{k+1}:=y^l_t 1_{(t-\delta_2,t)} + y^r_t 1_{[t,t+\delta_3)}$. It is clear that $\pathy^t_{k+1}$ is a local $\cadlag$ selection of $\Gamma^{k+1} + \epsilon_{k+1} \uball$ and $\pathy_k + 2 \epsilon_k \uball$. For $t\in\{0,T\}$ we can construct local selections by similar arguments.

The constructed intervals $\{I_t\}_{t \in [0,T]}$ define an open covering (in the relative euclidean topology $\tau_e$) of the interval $[0,T]$. There exists a $\tau_e$-continuous partition of unity subordinated to a locally finite subcovering from which a global selection $\pathy_{k+1}$ can be produced by pasting together the local selections.
\end{proof}

Now we establish the sufficiency in Theorem~\ref{lab:maintheorem1}.
\begin{proposition}\label{labprop:rep}
Assume that $\Gamma$ is a $\tau_r$-isc closed convex-valued mapping  with full domain, and that  $\vec \Gamma$ has full domain. Under \thref{ass}, there exists a countable family $\{\pathy_{\nu}\}_{\nu \in \naturals} \subset \pathspace(\Gamma)$ of $\cadlag$ selections which are continuous in $[0,T]\setminus D_1$ and for $t \in [0,T]$
\[
\Gamma_t = \cl \{\pathy_{\nu}(t)\}_{\nu \in \naturals}.
\] 
\end{proposition}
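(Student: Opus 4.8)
The plan is to reduce the representation to a density statement and then to a per-ball targeting construction built on Theorem \ref{lab:maintheorem2}. Since every candidate $\pathy_{\nu}$ selects $\Gamma$ and each $\Gamma_t$ is closed, the inclusion $\cl\{\pathy_{\nu}(t)\}_{\nu}\subseteq\Gamma_t$ is automatic, so it suffices to produce a countable family of $\cadlag$ selections of $\Gamma$, each continuous on $[0,T]\setminus D_1$, whose values are dense in $\Gamma_t$ for every $t$. Because $\reals^d$ is separable, density holds as soon as the following is arranged: for every open ball $B$ with rational centre and radius and every $t$ with $\Gamma_t\cap B\neq\emptyset$, some member of the family takes a value in $\cl B$ at $t$ (shrinking $B$ around a prescribed point then yields approximation of any $y\in\Gamma_t$). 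Enumerating these countably many balls, the whole matter reduces to a construction performed one ball $B$ at a time.

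Fix such a $B$ and set $V:=\Gamma^{-1}(B)$, which is relatively $\tau_r$-open. I would exploit the order structure of $\tau_r$: every set $[a,b)$ is $\tau_r$-clopen, and consequently $V$ decomposes into countably many disjoint Euclidean-interval components together with a countable set of ``included left endpoints'' (the points of $V$ lying outside the Euclidean interior of $V$). On $V$, convexity of the values gives $\Gamma_t\cap\cl B=\cl(\Gamma_t\cap B)$, so $\Gamma\cap\cl B$ is closed convex-valued there and, as the closure of the $\tau_r$-isc mapping $\Gamma\cap B$ (Lemma \ref{lemma:iscStab}), is itself $\tau_r$-isc. For a component $[a,b)$ I would then form the global auxiliary mapping $\Lambda$ equal to $\Gamma\cap\cl B$ on $[a,b)$ and to $\Gamma$ off $[a,b)$. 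Using that $[a,b)$ is $\tau_r$-clopen, $\Lambda$ is $\tau_r$-isc with full domain; moreover $\vec\Lambda$ has full domain and $\Lambda$ satisfies \thref{ass}, because left limits see only one clopen piece and \thref{ass} transfers to $\Gamma\cap B$ by applying it to the bounded open set $B\cap O$. Theorem \ref{lab:maintheorem2} then delivers a $\cadlag$ selection of $\Lambda$ lying in $\cl B$ throughout $[a,b)$ and continuous off the relevant countable exceptional set. Since there are countably many components, and countably many balls, this assembles a countable family, and the density criterion of the first paragraph is met: a target $(t,y)$ with $y\in\Gamma_t\cap B$ places $t$ in some component (or at an included left endpoint), where the corresponding selection sits in $\cl B$.

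The delicate point, which I expect to be the main obstacle, is continuity outside $D_1$ precisely at the \emph{right} endpoints $b$ of the components, where the selection must leave $\cl B$. There $\Lambda$ switches from $\Gamma\cap\cl B$ to $\Gamma$, and since $\Gamma_b\cap B=\emptyset$ while $\Gamma_b$ generally escapes $\cl B$, the point $b$ threatens to enter the discontinuity set of $\Lambda$ even when $b\notin D_1$. The left endpoints cause no trouble: entering $\cl B$ is harmless because $\Gamma_a\cap\cl B\subseteq\Gamma_a\subseteq\vec\Gamma_a$ whenever $a\notin D_1$. To control the right endpoints I would use \thref{ass}, which guarantees $\overrightarrow{\Gamma\cap B}_b\neq\emptyset$, i.e. a genuine left limit inside $\cl B$, so the selection produced by Theorem \ref{lab:maintheorem2} does possess a left limit in $\cl B$ at $b$. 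Combining this with Lemma \ref{lem:D2} (so that $\vec\Gamma_b\subseteq\Gamma_b$ off a countable set), at every right endpoint $b$ outside the countable set $D_1\cup D_2$ this left limit can be taken in $\Gamma_b\cap\cl B$, rendering the selection continuous at $b$; hence all discontinuities are confined to $D_1\cup D_2$.

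What remains, and is the genuinely technical step, is to absorb the countably many problematic endpoints in $D_2\setminus D_1$, where $\Gamma_b\cap\cl B$ may be empty and a continuous exit from $\cl B$ right up to $b$ is impossible. My plan here is to let the selection leave $\cl B$ continuously on an arbitrarily small left-neighbourhood of each such $b$ — preserving continuity at $b$ itself since the value at $b$ is then a point of $\Gamma_b$ approached from within $\Gamma$ — at the cost of forfeiting the $\cl B$-target on that neighbourhood. Because $D_2$ is countable (Lemma \ref{lem:D2}) and a target $t$ always lies in the $\tau_r$-open set $V$, hence strictly to the left of its component's right endpoint, the forfeited targets near any fixed $t$ can be recovered by a different rational ball whose component containing $t$ does not have its right endpoint in this exceptional set, or keeps $t$ clear of the shrinking exit zone. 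Verifying that this recovery can be carried out uniformly over the countable family, so that density survives while every selection stays continuous on $[0,T]\setminus D_1$, is the crux; the rest is the clopen-decomposition bookkeeping and a direct appeal to Theorem \ref{lab:maintheorem2}.
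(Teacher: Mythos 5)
Your overall skeleton -- reduce to density over countably many rational balls $B$, modify $\Gamma$ on pieces of $\Gamma^{-1}(B)$, and invoke Theorem \ref{lab:maintheorem2} on the modified mapping -- is exactly the paper's strategy. But there is a genuine gap, and it is created by your choice of decomposition. You split $V=\Gamma^{-1}(B)$ into \emph{maximal} half-open components $[a,b)$, so that at the right endpoint $b$ one has $\Gamma_b\cap B=\emptyset$. Then for your auxiliary mapping $\Lambda$ the exceptional set $\{t\mid \Lambda_t\not\subset\vec\Lambda_t\}$ contains $b$ whenever $\Gamma_b\not\subset\cl B$ (which is the typical situation), even if $b\notin D_1$: indeed $\Lambda_b=\Gamma_b$ while $\vec\Lambda_b\subset\cl B$. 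So Theorem \ref{lab:maintheorem2} applied to $\Lambda$ only yields selections continuous outside $D_1$ \emph{union the set of component right endpoints}, which is not what the proposition demands. You see this yourself, and your repair -- exiting $\cl B$ on a small left-neighbourhood of each bad $b$ and then ``recovering'' the forfeited targets with other balls or shrinking exit zones -- is precisely the step you leave unverified. It is not routine: the early exit is itself a new selection problem (you must connect a point of $\overrightarrow{\Gamma\cap\cl B}_b$ to a point of $\Gamma_b$ through $\Gamma$ on $(b-\eta,b)$ while keeping continuity at $b$), and making density survive requires an additional countable indexing over exit parameters that you do not set up. As written, the proof is incomplete at its acknowledged crux.

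The paper's proof avoids the problem entirely by not using maximal components. Since $U=\Gamma^{-1}(y+\epsilon\uball)$ is $\tau_r$-open, it is a countable union of nondegenerate \emph{closed} intervals $[a_n,b_n]\subset U$, and because $D_1$ is countable one may choose every $b_n\notin D_1$ (with a separate two-case treatment when $T\in U$, according to whether $T\in D_1$). With $b_n\in U$ the modified mapping $\phi$ ($=\Gamma\cap(y+\epsilon\uball)$ on $[a_n,b_n]$, $=\Gamma$ elsewhere) never has to ``leave the ball'' at a point where the ball misses $\Gamma$: one checks $\{t\mid\phi_t\not\subset\vec\phi_t\}\subseteq D_1$, so Theorem \ref{lab:maintheorem2} directly produces a selection of $\cl\phi$ continuous off $D_1$ and lying in $y+2\epsilon\uball$ on $[a_n,b_n]$. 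Every $t\in U$ lies in some such $[a_n,b_n]$, so density is immediate without any recovery argument. If you replace your component decomposition by this covering with well-chosen closed intervals, the rest of your argument (the isc and \thref{ass} verifications for the auxiliary mapping, which are essentially correct) goes through and the delicate third and fourth paragraphs of your proposal become unnecessary.
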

\begin{proof}
If $T \in D_1$  we take a selection $\pathy \in \pathspace(\Gamma)$ continuous outside $D_1$ from which we can easily construct a sequence $\{\pathy_{j}\}_{j \in \naturals} \subset \pathspace(\Gamma)$ by modifying $\pathy$ at $T$ such that $\{\pathy_j(T)\}_{j \in  \naturals}$ is dense in $\Gamma_T$. This settles down the representation for $t=T$ in case $T \in D_1$.
	
Let $\mathcal{D}=\{y_m\}_{m\in \naturals}$ be a countable dense subset of $\reals^d$. Take $\epsilon=\frac{1}{2^k}$ for $k  \in \naturals$ and $y =y_m$ for $y_m\in \mathcal{D}$. Assume that $U:=\Gamma^{-1}(y + \epsilon \uball)$ is non empty.  The set $U$ is open in $\tau_r$ (relativized to $[0,T]$) and can be expressed as a countable union of intervals of the form $[a,b]$. Indeed, $\tau_r$ is hereditary Lindel\"of  and $U$ can be written as the countable union of intervals $[x,y)\cap [0,T]$ and each of these intervals can themselves be written as the countable union of intervals $[a,b]$. If $T \in U$ we distinguish two cases.  If $T \notin D_1$ then $T$  can be taken as an element of an interval $[a,T]\subset U$ due to Lemma \ref{lem:Elementary1}. In this case,  we will consider the representation (i) $U= \bigcup_{n \in \naturals} [a_n,b_n]$ with $a_n<b_n$, thus, no interval collapses to a point.  In the second case  $T \in U$ and $T \in D_1$ and we consider the representation (ii)  $U= \{T\} \cup  \bigcup_{n \in \naturals} [a_n,b_n]$ with $a_n<b_n<T$ and $b_n \notin D_1$. Hence, on both cases we do not consider trivial intervals and $b_n \notin D_1$.

Now take an interval $[a,b] =[a_n,b_n] \subset U$ with $a_n<b_n$ and   $b_n \notin D_1$. We fix the notation $[a,b]^c:=[0,T]\setminus [a,b]$. Let $\phi$ be the mapping defined by 
\[
\phi_z=
\begin{cases}
\Gamma_z & \mbox{ for } z \in [a,b]^c\\
\Gamma_z \cap (y + \epsilon \uball) & \mbox{ for } z \in [a,b].
\end{cases}
\]
It is simple to verify that the mapping $\phi$ is convex valued, has full domain,  and that it is $\tau_r$-isc.

We verify \thref{ass} and that  $\vec \phi$ has full domain.
Let $\delta>0$  and $t \in (0,T]$ be such that $(t-\delta,t)\subset \phi^{-1}(O)$ for $O$ an open subset of $\reals^d$.
For $t\notin (a,b]$ there exists $\delta'\in (0,\delta)$ such that $(t-\delta',t) \subset [a,b]^c$ so $\phi=\Gamma$ in this interval and we have $\phi \cap O=\Gamma \cap O$, hence  \thref{ass} is clearly satisfied in $(a,b]$.
Take now $t \in (a,b]$. There exists $\delta'' \in (0,\delta)$ such that $(t-\delta'',t)\subset (a,b]  \cap (t-\delta,t)$ and for  $z \in (t-\delta'',t)$
we have that $\phi_z \cap O =\Gamma_z \cap (y+\epsilon \uball) \cap O \neq \emptyset$ so by \thref{ass} we have $\overrightarrow{\Gamma \cap(y+\epsilon \uball) \cap O}=\overrightarrow{\phi \cap O}$ is non empty at $t$.  Thus, $\phi$ satisfies \thref{ass} and $\dom \vec \phi=[0,T]$.

The set $\{t \mid \phi_t \not \subset \vec \phi_t\}$ is included in $D_1$. Indeed, 
for $t\notin (a,b]$ there exists $\delta>0$ such that $(t-\delta,t) \subset [a,b]^c$ so $\vec \phi_t = \vec\Gamma_t$ and $\phi_t \subset \Gamma_t$. Hence, $\phi_t \subset \vec \phi_t$ whenever $t \notin D_1$. Take now $t \in (a,b] \setminus D_1$ and $w \in \phi_t$. Let $\eta>0$ be such that $w +\eta \uball \subset y + \epsilon \uball$. Note that $w \in \vec \Gamma_t$ since $t \notin D_1$ and then  $w \in \Gamma_t \subset \vec \Gamma_t$.
There exists   $\delta_{\eta} >0$ such that for $z \in (t- \delta_{\eta},t)$ we have  $\Gamma_z \cap (w +\eta \uball) \neq \emptyset$  due to Lemma \ref{lem:Elementary1},  since $w \in \vec \Gamma_t$. Hence, $\overrightarrow{\Gamma \cap(w+\eta \uball)} \neq \emptyset$ at $t$ by \thref{ass}. Given that $\eta$ was arbitrary, we deduce that $w \in \vec \phi_t$. Hence $\phi_t \subset \vec\phi_t$. This proves the claim.

Hence, there exists  a $\cadlag$ selection of $\cl \phi$ continuous outside $D_1$, due to Theorem \ref{lab:maintheorem2}. From this one derives the existence of a selection $\pathy_{n,\epsilon,y} \in \pathspace(\Gamma)$ with $\pathy_{n,\epsilon,y}(t) \in y + 2 \epsilon \uball$ for $t \in [a_n,b_n]$, and the continuity in $[0,T]\setminus D_1$. 

If $T \notin D_1$ it is easy to verify that the required representation holds with the countable family $\{\pathy_{n,2^{-k},y_m}\}_{n,k,m} \subset \pathspace(\Gamma)$.  Analogously, if $T \in D_1$, consider the family $\{\pathy_{j}\}_{j \in \naturals} \cup \{\pathy_{n,2^{-k},y_m}\}_{n,k,m} \subset \pathspace(\Gamma)$.
\end{proof}

\section{Examples} \label{sec:examples}
In this section we give examples of mappings having the representation \eqref{eq:MichaelRepresentation}. For the first two examples we prove directly the representation and then conclude that the mapping in the examples satisfy \thref{ass}. For the last two examples, we verify \thref{ass} and conclude the representation \eqref{eq:MichaelRepresentation}.

The mapping $\Gamma$ is said to be {\em right-outer semicontinuous} (right-osc) if  its graph is closed in the product topology of $\tau_r$ and the  usual topology on $\reals^d$. The mapping $\Gamma$ is {\em right-continuous} ($\cad$) if it is both right-isc and right-osc. Left-outer semicontinuous (left-osc) and left-continuous ($\cag$) mappings are defined analogously. We say that $\Gamma$ {\em has limits from the left} ($\lag$) if, for all $t$,
\[
\liminf_{s\upuparrows t} \Gamma_s=\limsup_{s\upuparrows t} \Gamma_s,
\]
where the limits are in the sense of \cite[Section 5.B]{rw98} and are taken along strictly increasing sequences. {\em Having limits from the right} ($\lad$) is defined analogously. A mapping $\Gamma$ is {\em \cadlag\ } (resp. {\em \caglad\ }) if it is both $\cad$ and $\lag$ (both $\cag$ and $\lad$).

In the following theorem, the distance of $x$ to $\Gamma_t$ is defined, as usual, by  
\[
d(x, \Gamma_t):=\inf_{ x' \in \Gamma_t} d(x,x'),
\]
where the distance of two points is given by the  euclidean metric.

\begin{proposition}\label{thm:cadlagGamma}
Let $\Gamma:[0,T]  \rightrightarrows \reals^d$ be  a \cadlag\ nonempty closed-convex valued mapping. For every $x\in\reals^d$, the function $\pathy$ defined by
\[
\pathy(t)=\argmin_{x'\in\Gamma_t} d(x,x')
\]
satisfies $\pathy\in \pathspace(\Gamma)$ and
\begin{equation*}
\pathy(t-)=\argmin_{x'\in\vec\Gamma_t} d(x,x').
\end{equation*}
In particular,
\[
\Gamma_t=\cl \{\pathy(t) \mid \pathy \in \pathspace(\Gamma)\}
\]
and $\vec \Gamma$ is \caglad\ nonempty convex-valued with
\[
\vec \Gamma_t=\cl \{\pathy(t-)\mid \pathy\in \pathspace(\Gamma)\}.
\]
\end{proposition}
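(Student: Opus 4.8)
The plan is to realize $\pathy$ as the pointwise metric projection $\pathy(t)=P_{\Gamma_t}(x)$ onto the nonempty closed convex set $\Gamma_t$, and to drive every assertion by a single analytic fact: the metric projection onto nonempty closed convex sets is continuous for set convergence. Concretely, if nonempty closed convex sets $C_n\subset\reals^d$ converge to a nonempty closed convex $C$ in the sense of \cite[Section~5.B]{rw98}, then $P_{C_n}(x)\to P_C(x)$ for every $x$. I would obtain this from epi-convergence: $C_n\to C$ is equivalent to epi-convergence $\iota_{C_n}\to\iota_C$, adding the continuous coercive term $\tfrac12|x-\cdot|^2$ preserves epi-convergence, and the argmin of the sum (a singleton by strict convexity) therefore converges to the argmin of the limit, the coercive quadratic supplying the eventual level-boundedness needed for convergence of minimizers. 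Well-definedness of $\pathy$ and the membership $\pathy\in\pathspace(\Gamma)$ as a selection are then immediate from existence and uniqueness of the projection.

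Right-continuity of $\pathy$ follows by translating the $\cad$ property into set convergence: $\Gamma$ right-isc gives $\Gamma_t\subset\liminf_{s\downdownarrows t}\Gamma_s$ and right-osc gives $\limsup_{s\downdownarrows t}\Gamma_s\subset\Gamma_t$, so $\Gamma_s\to\Gamma_t$ as $s\downdownarrows t$, and the projection fact yields $\pathy(s)=P_{\Gamma_s}(x)\to P_{\Gamma_t}(x)=\pathy(t)$. For the left limit, the $\lag$ property says $\vec\Gamma_t=\liminf_{s\upuparrows t}\Gamma_s=\limsup_{s\upuparrows t}\Gamma_s$ is a genuine set limit; being a Painlev\'e--Kuratowski limit of convex sets it is closed and convex, and the substantive content of $\Gamma$ having a left limit is that $\vec\Gamma_t$ is nonempty (equivalently, $\Gamma_s$ does not escape to the horizon, so $\{\pathy(s)\}_{s\upuparrows t}$ stays bounded). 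Applying the projection fact to $\Gamma_s\to\vec\Gamma_t$ then shows that $\pathy(t-)$ exists and equals $P_{\vec\Gamma_t}(x)=\argmin_{x'\in\vec\Gamma_t}d(x,x')$, simultaneously proving $\pathy$ is $\cadlag$ and identifying its left limit.

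The two representation identities follow by varying $x$ and using that projection onto a set is surjective onto it. Every selection value lies in the set, so $\{\pathy(t)\mid\pathy\in\pathspace(\Gamma)\}\subset\Gamma_t$; conversely, for $y\in\Gamma_t$ the choice $x=y$ gives $P_{\Gamma_t}(y)=y$, whence $\Gamma_t=\cl\{\pathy(t)\mid\pathy\in\pathspace(\Gamma)\}$, the set being already closed. For the left-limit representation, any $\pathy\in\pathspace(\Gamma)$ satisfies $\pathy(t-)=\lim_{s\upuparrows t}\pathy(s)$ with $\pathy(s)\in\Gamma_s$, so $\pathy(t-)\in\limsup_{s\upuparrows t}\Gamma_s=\vec\Gamma_t$; and the choice $x=y$ for $y\in\vec\Gamma_t$ gives $P_{\vec\Gamma_t}(y)=y$, yielding $\vec\Gamma_t=\cl\{\pathy(t-)\mid\pathy\in\pathspace(\Gamma)\}$, with nonemptiness and convexity already in hand.

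It remains to show $\vec\Gamma$ is $\caglad$. Since $\vec\Gamma_t$ is the left limit of the $\cadlag$ map $\Gamma$, it inherits the scalar fact that the left-limit function of a $\cadlag$ function is $\caglad$ with right limits recovering the original. I would argue this at the set level: left-continuity $\vec\Gamma_s\to\vec\Gamma_t$ as $s\upuparrows t$ via a diagonal argument identifying the iterated left limit with the left limit, and existence of right limits with $\lim_{s\downdownarrows t}\vec\Gamma_s=\Gamma_t$ from right-continuity of $\Gamma$; alternatively one transfers these from the scalar statements for the selections $\pathy_x$ through the representation $\vec\Gamma_t=\cl\{\pathy_x(t-)\mid x\in\reals^d\}$. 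I expect the main obstacle to be the projection-continuity lemma together with the nonemptiness (equivalently, boundedness) of $\vec\Gamma_t$: ensuring the projecting selection does not blow up as $s\upuparrows t$ is precisely where genuine left limits, rather than a mere coincidence of empty inner and outer limits, must be invoked, with the $\caglad$ bookkeeping for $\vec\Gamma$ the second most delicate point.
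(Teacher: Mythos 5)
Your proposal is correct and follows essentially the same route as the paper: both realize $\pathy$ as the metric projection $P_{\Gamma_t}(x)$ and drive everything through the continuity of projections under Painlev\'e--Kuratowski convergence of nonempty closed convex sets (the paper cites \cite[Proposition 4.9]{rw98} for exactly the fact you re-derive via epi-convergence), with the $\cad$ and $\lag$ properties supplying the set convergences $\Gamma_s\to\Gamma_t$ and $\Gamma_s\to\vec\Gamma_t$. The only cosmetic differences are that you identify $\pathy(t-)=P_{\vec\Gamma_t}(x)$ directly from projection continuity where the paper adds a short contradiction argument, and you sketch the \caglad\ property of $\vec\Gamma$ by a diagonal argument where the paper cites \cite[Exercise 4.2]{rw98}.
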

\begin{proof}
Since $\Gamma_t$ is closed-convex valued, by the strict convexity of the distance mapping, the argmin in the definition of $\pathy$ exists and is unique; see \cite[Thm. 2.6]{rw98}. By \cite[Proposition 4.9]{rw98}, $\pathy$ is $\cad$. Take $t \in (0,T]$. On the other hand, for every strictly increasing $t^\nu\nearrow t$, $\Gamma_{t^\nu}\to \vec \Gamma_t$, so $\pathy$ is $\lag$, by \cite[Proposition 4.9]{rw98} again. 
	
Next we show
\begin{equation*}
\pathy(t-)=\argmin_{x'\in\vec\Gamma_t} d(x,x').
\end{equation*}
Since $\Gamma$ is \lag, we get that the left continuous version of $\pathy$ denoted $\pathy_-$ is a selection of $\vec\Gamma$, so the inequality $d(x, \vec \Gamma_{\bar t})\le d(x, \pathy(t-)) $ is trivial. For the other direction, assume for a contradiction that  $d(x, \vec \Gamma_{\bar t})<d(x, \pathy(\bar t-)) $ for some $\bar t \in (0,T]$. There is $s<\bar t$ such that  $d(x, \vec \Gamma_{\bar t})<d(x, \pathy(s')) $ for all $s'\in(s,\bar t)$. By the definition of $\vec\Gamma$, this means that 
\[
\pathy(s')\notin\argmin_{x'\in\Gamma_{s'}} d(x,x')
\]
for some $s'\in(s,\bar t)$, which is a contradiction. The claims $\Gamma_t=\cl \{\pathy(t) \mid \pathy \in \pathspace(\Gamma)\}$ and $\vec \Gamma_t=\cl \{\pathy(t-)\mid \pathy\in \pathspace(\Gamma)\}$ are now immediate while $\vec\Gamma$ is \caglad\ due to \cite[Exercise 4.2]{rw98}.
\end{proof}

For the next example recall that $\Gamma$ is {\em solid} if for each $t \in [0,T]$ the set $\Gamma_t$ is equal to the closure of its interior. For a closed-convex valued mapping, this property is equivalent to $\interior \Gamma_t\ne\emptyset$ for all $t \in [0,T]$; see \cite[Example 14.7]{rw98}.  Recall also our convention that for a function $\pathy$ we set $\pathy(0-)=0$.

\begin{proposition}\label{thm:solidGamma}
Let $\Gamma:[0,T]  \rightrightarrows \reals^d$ be a closed convex-valued solid mapping with full domain and $\tau_r$-isc. Assume that $\vec \Gamma$ has full domain. If  $\vec \Gamma$ is also solid then $\Gamma$ has a representation \eqref{eq:MichaelRepresentation}.  In this case
\[
\vec \Gamma_t =\cl \{\pathy(t-) \mid \pathy \in \pathspace(\Gamma)\}, \hspace{.1cm} t\in [0,T].
\]
\end{proposition}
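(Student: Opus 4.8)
The plan is to read the statement as a direct application of Theorem \ref{lab:maintheorem1}. Since $\Gamma$ is assumed closed convex-valued with full domain and $\tau_r$-isc, and since $\vec\Gamma$ is assumed to have full domain, the only hypothesis of Theorem \ref{lab:maintheorem1} that is not already granted is \thref{ass}. Once \thref{ass} is established, the representation \eqref{eq:MichaelRepresentation} and the continuity of the selections off $D_1$ follow immediately. Hence the whole problem reduces to showing that solidity of $\Gamma$ together with solidity of $\vec\Gamma$ forces \thref{ass}.

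To verify \thref{ass} I would fix $t\in(0,T]$ and a bounded open $O\subseteq\reals^d$ with $(t-\delta,t)\subset\Gamma^{-1}(O)$, and try to produce a point of $O$ lying in $\overrightarrow{\Gamma\cap O}$ at $t$, i.e. a $y\in O$ with $d(y,\Gamma_s\cap O)\to 0$ as $s\upuparrows t$. First I would use solidity of $\Gamma$ to sharpen the hypothesis: since each $\Gamma_s$ equals the closure of its nonempty interior and $O$ is open, $\Gamma_s\cap O\neq\emptyset$ already gives $\interior\Gamma_s\cap O\neq\emptyset$ for $s\in(t-\delta,t)$. Next I would use solidity of $\vec\Gamma$: choose $w\in\interior\vec\Gamma_t$ with $w+2r\uball\subseteq\vec\Gamma_t$. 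Selecting finitely many points of $\vec\Gamma_t$ whose convex hull contains $w+r\uball$ and approximating each of them along an arbitrary strictly increasing sequence $s_n\upuparrows t$, convexity of $\Gamma_{s_n}$ yields $w+r\uball\subseteq\Gamma_{s_n}$ for large $n$; since the sequence was arbitrary, there is $\delta'\in(0,\delta)$ with $w+r\uball\subseteq\Gamma_s$ for all $s\in(t-\delta',t)$. With a fixed full-dimensional ball inside every nearby $\Gamma_s$ and with $\Gamma_s$ always meeting $O$ at some $p_s\in\interior\Gamma_s\cap O$, I would then slide $w$ toward $O$ along the segments $(1-\lambda)w+\lambda p_s\in\Gamma_s$, which by convexity still carry a ball of radius $(1-\lambda)r$; this ``thickness'' is exactly what is needed to keep a limiting point strictly inside the open set $O$, after which Lemma \ref{lem:Elementary1} converts the resulting $\vec\Gamma$-membership back into the required approach $d(y,\Gamma_s)\to 0$.

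I expect the genuinely delicate point—the main obstacle—to be upgrading this convex-combination construction from an \emph{outer}-limit statement (a point approximable along some subsequence $s_n\upuparrows t$, which only places it in $\cl O$ and is comparatively cheap) to an \emph{inner}-limit statement valid along every sequence and landing inside $O$ itself. This is precisely where solidity of $\vec\Gamma$ must do the essential work: it is what prevents the portion of $\Gamma_s$ meeting $O$ from drifting away as $s\upuparrows t$, securing a uniform ball that both survives to the inner limit and can be nudged strictly into $O$. I would therefore spend most of the effort on this robustness step, as the premise $(t-\delta,t)\subset\Gamma^{-1}(O)$ by itself only controls individual sections and not their joint limiting behaviour.

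Finally, for the identity $\vec\Gamma_t=\cl\{\pathy(t-)\mid\pathy\in\pathspace(\Gamma)\}$ I would argue both inclusions. The inclusion $\supseteq$ is immediate, since for any $\cadlag$ selection $\pathy$ the left limit $\pathy(t-)$ is approximated from the left by $\pathy(s)\in\Gamma_s$ and hence lies in $\vec\Gamma_t$. For $\subseteq$ it suffices, by solidity, to reach the interior points: given $w\in\interior\vec\Gamma_t$, the propagation step above shows $w\in\Gamma_s$ on a left-neighbourhood of $t$, so a $\cadlag$ selection equal to $w$ there (extended arbitrarily to a selection of $\Gamma$, which exists by the representation already obtained) has left limit $w$ at $t$. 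Thus $\interior\vec\Gamma_t\subseteq\{\pathy(t-)\}$, and taking closures with $\vec\Gamma_t=\cl\interior\vec\Gamma_t$ gives $\vec\Gamma_t\subseteq\cl\{\pathy(t-)\}$, completing the proof.
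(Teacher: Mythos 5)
Your plan routes everything through \thref{ass} and Theorem \ref{lab:maintheorem1}; the paper does not. It proves the proposition directly: it first shows that each $y^r\in\interior\Gamma_{\bar t}$ remains in $\Gamma_u$ on a right neighbourhood $[\bar t,\bar u)$ (take a simplex of points $\bar y^i\in\interior\Gamma_{\bar t}$ whose $\epsilon$-perturbations still contain $y^r$ in their convex hull, and use $\tau_r$-isc plus convexity), and that each $y^l\in\interior\vec\Gamma_{\bar t}$ remains in $\Gamma_u$ on a left neighbourhood $(s,\bar t)$ --- this second fact is exactly your ``uniform ball'' step, and your argument for it is sound. From these two propagation facts the paper assembles locally constant genuine selections and pastes them with a partition of unity as in Lemma \ref{labLemma1} and Theorem \ref{lab:maintheorem2}; \thref{ass} is never verified. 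Your last paragraph on $\vec\Gamma_t=\cl\{\pathy(t-)\mid\pathy\in\pathspace(\Gamma)\}$ coincides with the paper's argument.

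The step you defer --- producing a point of the inner limit $\overrightarrow{\Gamma\cap O}_t$ from the sections $p_s\in\Gamma_s\cap O$ --- is a genuine gap, and I do not believe it can be closed: solidity of $\Gamma$ and $\vec\Gamma$ does not force \thref{ass}. Take $d=2$, $T=1$, $q_s=10(\cos\theta_s,\sin\theta_s)$ with $\theta_s=1/(1-s)$, $\Gamma_s=\co\bigl(\cl\uball\cup\{q_s\}\bigr)$ for $s<1$ and $\Gamma_1=\cl\uball$. All hypotheses of the proposition hold ($\vec\Gamma_1=\cl\uball$, solid), and the representation itself can be checked by hand via the selections $u\mapsto(1-\mu)b+\mu\rho(u)q_u$ with continuous cutoffs $\rho$ vanishing near $1$. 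Yet for the bounded open annulus $O=\{x\mid 2<|x|<3\}$ one has $(1-\delta,1)\subset\Gamma^{-1}(O)$, while every point of $\Gamma_s\cap O$ lies within distance $1$ of the spinning ray through $q_s$; choosing $s_n\upuparrows1$ with $\theta_{s_n}$ pointing opposite to any fixed candidate $y$ keeps $d(y,\Gamma_{s_n}\cap O)$ bounded away from $0$, so $\overrightarrow{\Gamma\cap O}_1=\emptyset$ and \thref{ass} fails. (In particular you should not lean on the necessity half of Theorem \ref{lab:maintheorem1} to argue that the assumption ``must'' hold under these hypotheses.) In your own construction the failure is visible: $w+r\uball$ is disjoint from $O$, any convex combination $(1-\lambda)w+\lambda p_s$ landing in $O$ needs $\lambda$ near $1$, so the residual ball of radius $(1-\lambda)r$ degenerates exactly when you need it and the candidate point is dragged along with the oscillating $p_s$. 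Replace the reduction by the paper's direct construction from your two propagation facts.
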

\begin{proof}
We first show that $\pathspace(\Gamma)\ne\emptyset$. For $\bar t \in [0,T)$ and $y^r  \in \interior \Gamma_{\bar t}$, there exists $\delta>0$ such that $y^r \in \Gamma_u$ for $u \in [\bar t,\bar t+\delta]$ since $\Gamma$ is $\tau_r$-isc and solid. Indeed, let $\bar y^i$ be a finite set of points in $\interior \Gamma_{\bar t}$ such that $y^r$ belongs to the interior of the convex hull $\co\{\bar y^i\}$. Let $\epsilon>0$ be small enough so that $y^r \in \co\{ v^i\}$ whenever, for every $i$, $v^i\in (\bar y^i+\epsilon \uball)$. Since $\Gamma$ is $\tau_r$-isc, there is, for every $i$, $u^i>\bar t$ such that $\Gamma_u\cap(\bar y^i+\epsilon \uball)\ne\emptyset$ for every $u\in[\bar t,u^i)$. Denoting $\bar u=\min u^i$, we have, by convexity of $\Gamma$, that 
\begin{equation}\label{eq:kp}
y^r\in \Gamma_u \mbox{ for every } u\in[\bar t,\bar u).
\end{equation}
Now assume $\bar t>0$ and  take $y^l  \in \interior \vec \Gamma_{\bar t}$.  We now show the existence of $s<\bar t$ such that
\begin{equation}\label{eq:kpleft}
y^l \in \Gamma_u \mbox{ for every } u\in(s,\bar t].
\end{equation}
Assume for a contradiction the existence of $t^\nu \nearrow \bar t$ such that $y^l \notin \Gamma_{t^\nu}$.  Let $\bar y^i \in \interior \vec \Gamma_{\bar t}$ be $d+1$ points and $\epsilon>0$  such that for any points $\tilde y^i \in \bar y^i  + \epsilon \uball$, $\tilde y^i \in  \vec \Gamma_{\bar t}$ and $y^l \in \co\{\tilde y^i \}$.  By the definition of $\vec \Gamma$ as a left-limit, there exists $\nu_0 \in \naturals$ such that for all $\nu> \nu_0$ there exists $y^i_{\nu} \in \Gamma_{t^\nu}$ with $y^i_{\nu} \in \bar y^i  + \epsilon \uball$ for $i=1,\ldots,d+1$. Then,  $y^{l} \in \co\{y^i_{\nu}\}$ and this last set is included in $\Gamma_{t^\nu}$ by convexity. Then, $y^{l} \in \Gamma_{t^\nu}$, a contradiction.
 
After the preliminary preparations showing the existence of \eqref{eq:kp} and \eqref{eq:kpleft}, we construct local selections of $\Gamma$ that can be pasted together through a partition of unity as we have done in the proof of Lemma \ref{labLemma1} and of Theorem \ref{lab:maintheorem2}.
	
To prove $\vec \Gamma_t =\cl \{\pathy(t-) \mid \pathy \in \pathspace(\Gamma)\}$, the inclusion $\supseteq$ is clear. Now take $y^l \in \interior \vec \Gamma_{\bar t}$ and $s<\bar t$ as in  \eqref{eq:kpleft} and $\pathy \in \pathspace(\Gamma)$. Defining
\[
\begin{cases}
y^l \quad & t \in [s, \bar t)\\
\pathy(t) \quad & otherwise,
\end{cases}
\]
we get the remaining inclusion.
\end{proof}

In the next examples we verify that \thref{ass} is satisfied.  Then the representation \eqref{eq:MichaelRepresentation} holds by Theorem \ref{lab:maintheorem1}.
\begin{proposition}
Let $f:[0,T] \to \reals$ be continuous with respect to $\tau_r$. Assume $f > 0$. Let $\Gamma$ be the mapping defined by $\Gamma_t:= [0,f(t)]$. Then $\Gamma$ satisfies \thref{ass}.
\end{proposition}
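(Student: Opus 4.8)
The plan is to verify \thref{ass} directly by exhibiting, for each relevant $t$ and $O$, one explicit point lying in $\overrightarrow{\Gamma\cap O}$ at $t$. Writing $\phi_s:=\Gamma_s\cap O$, I will use that $y$ belongs to the inner limit $\liminf_{s\upuparrows t}\phi_s$ as soon as $d(y,\phi_s)\to 0$ as $s\upuparrows t$; in fact it suffices to produce a single $y^*$ with $y^*\in\cl\phi_s$ for all $s$ in a left neighbourhood of $t$, since then $d(y^*,\phi_s)\equiv 0$ and $y^*$ trivially lies in the inner limit along every increasing sequence. So I fix $t\in(0,T]$ and a bounded open $O\subset\reals$ with $(t-\delta,t)\subset\Gamma^{-1}(O)$ for some $\delta>0$, i.e.\ $\phi_s\neq\emptyset$ for every $s\in(t-\delta,t)$, and look for such an anchor $y^*$.

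First I would split according to whether $0\in O$. If $0\in O$, then since $f(s)>0$ we have $0\in[0,f(s)]=\Gamma_s$, hence $0\in\phi_s$ for every $s$, and $y^*=0$ works. Suppose instead $0\notin O$. I set $O^+:=O\cap(0,\infty)$ and $c:=\inf O^+$. Here $O^+\neq\emptyset$, for otherwise $\phi_s=[0,f(s)]\cap O=\emptyset$ (as $[0,f(s)]\subset[0,\infty)$ and $0\notin O$), contradicting the hypothesis; thus $c\in[0,\infty)$ is finite. Next I check $c\notin O$: this is immediate if $c=0$, while if $c>0$ and $c\in O$ then openness of $O$ would force positive points of $O$ strictly below $c$, contradicting $c=\inf O^+$. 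Consequently $O^+\subset(c,\infty)$, and since $0\notin O$ we get $\phi_s=O^+\cap(0,f(s)]=O^+\cap(c,f(s)]$; in particular $\phi_s\neq\emptyset$ forces $f(s)>c$ for every $s\in(t-\delta,t)$.

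It then remains to show $c\in\cl\phi_s$ for each such $s$. Choosing $p_j\in O^+$ with $p_j\downarrow c$ (possible because $c=\inf O^+$ and $c\notin O^+$), and fixing $s\in(t-\delta,t)$, the relation $p_j\to c<f(s)$ gives $p_j\in(0,f(s)]$ for all large $j$, whence $p_j\in\phi_s$ and $p_j\to c$. Thus $c\in\cl\phi_s$, i.e.\ $d(c,\phi_s)=0$, for every $s\in(t-\delta,t)$, so $c\in\overrightarrow{\Gamma\cap O}$ at $t$, which verifies \thref{ass}.

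The only genuinely delicate points are the reduction to a single anchor valid for all $s$ near $t$, and the realisation that the correct anchor is the boundary value $c=\inf\bigl(O\cap(0,\infty)\bigr)$ rather than an interior point of $O$: one must observe that $\phi_s\neq\emptyset$ already forces $f(s)>c$, so the points of $O$ just above $c$ are eventually captured by $[0,f(s)]$ regardless of how $f$ oscillates from the left. I expect this to be the main obstacle. Note that the argument uses only $f>0$ and not the right-continuity of $f$, and that the anchor $c$ may lie on $\partial O$, consistent with Remark~\ref{labRemwAssA}.
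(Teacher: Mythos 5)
Your proposal is correct and follows essentially the same route as the paper: both identify the anchor point $\inf(O\cap[0,\infty))$ (your $c$, the paper's $a^*$), observe that nonemptiness of $\Gamma_s\cap O$ forces $f(s)$ to exceed it, and then capture points of $O$ just above it inside $[0,f(s)]$ to place the anchor in $\overrightarrow{\Gamma\cap O}$ at $t$. Your version merely adds the explicit case split on $0\in O$ and phrases the conclusion via $d(c,\Gamma_s\cap O)=0$ on a left neighbourhood rather than building sequences $y_{z_n}\to a^*$ by hand; these are cosmetic differences.
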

\begin{proof}
Let $O$ be an open set with $\Gamma_s \cap O \neq \emptyset$ for $s \in (t-\delta,t)$. Let $a^*:=\inf(O \cap \reals_+)$. It is clear that  for $s  \in (t-\delta,t)$ we have $a^* < f(s)$, otherwise $\Gamma_s\cap O =\emptyset$. From the inequality is easy to verify that $a \in \overrightarrow{\Gamma \cap O}$ at $t$. Indeed, take an increasing sequence $z_n \nearrow t$. For each $n$, there exists $y_{z_n} \in (a^*,f(z_n)) \cap (a^*,a^*+(t-z_n))\cap O$. Hence $y_{z_n} \to a^*$ showing that $a^* \in \liminf_{n}(\Gamma_{z_n} \cap O)$. 
\end{proof}

\begin{proposition}
Let $f,g:[0,T] \to \reals$ be two $\cadlag$ functions  with $f \geq g$. Let $\Gamma$ be the mapping defined by $\Gamma_t:= [g(t),f(t)]$. Then $\Gamma$ satisfies \thref{ass}.
\end{proposition}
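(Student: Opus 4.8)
The plan is to transfer everything to the left limits of $f$ and $g$ and then imitate the candidate-point construction used in the preceding proposition. Since $f,g$ are $\cadlag$ and $g\le f$, the left limits $\ell:=\lim_{s\upuparrows t}g(s)$ and $r:=\lim_{s\upuparrows t}f(s)$ exist and satisfy $\ell\le r$; moreover the intervals $[g(s),f(s)]$ converge, in the sense of \cite[Section 5.B]{rw98}, to $[\ell,r]$, so that $\vec\Gamma_t=[\ell,r]$. Fix $t\in(0,T]$ and a bounded open $O$ with $(t-\delta,t)\subset\Gamma^{-1}(O)$, i.e. $[g(s),f(s)]\cap O\neq\emptyset$ for every $s\in(t-\delta,t)$. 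The goal is to exhibit a single point lying in $\liminf_n(\Gamma_{z_n}\cap O)$ for \emph{every} strictly increasing $z_n\upuparrows t$, since this is precisely the assertion $\overrightarrow{\Gamma\cap O}\neq\emptyset$ at $t$.

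First I would record the elementary fact that $[\ell,r]\cap\cl O\neq\emptyset$: if these two closed sets were at positive distance, then, because $g(s)\to\ell$ and $f(s)\to r$, for $s$ close enough to $t$ the whole interval $[g(s),f(s)]$ would lie outside $\cl O$, contradicting $[g(s),f(s)]\cap O\neq\emptyset$. This isolates the point, or points, through which the approximation must be built.

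The robust case is $O\cap(\ell,r)\neq\emptyset$. Here I pick any $w\in O\cap(\ell,r)$. For an arbitrary increasing $z_n\upuparrows t$ one has $g(z_n)\to\ell<w<r\to f(z_n)$, so $w\in[g(z_n),f(z_n)]=\Gamma_{z_n}$ for all large $n$, while $w\in O$; thus the constant sequence $w$ witnesses $w\in\liminf_n(\Gamma_{z_n}\cap O)$, and since the sequence was arbitrary, $w\in\overrightarrow{\Gamma\cap O}$. This is the analogue of the point $a^*$ in the previous proposition, except that here $w$ sits in the interior of the band $[\ell,r]$, which is what makes it insensitive to the motion of \emph{both} endpoints.

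The remaining case, $O\cap(\ell,r)=\emptyset$, is where the work lies. Then $O\subset(-\infty,\ell)\cup(r,\infty)$ and $\cl O\cap[\ell,r]\subseteq\{\ell,r\}$, so by the second paragraph $\ell\in\cl O$ or $r\in\cl O$; say $r\in\cl O$, i.e. $O$ accumulates at $r$ from the right. The natural candidate is $w=r$: for a sequence $z_n\upuparrows t$ along which $f(z_n)>r$ one finds $O$-points inside $(r,f(z_n))$ arbitrarily close to $r$ and hence inside $\Gamma_{z_n}$, exactly mimicking the choice $y_{z_n}\in(a^*,f(z_n))\cap(a^*,a^*+(t-z_n))\cap O$ of the previous proof (and symmetrically for $w=\ell$ from $g(z_n)<\ell$). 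The hard part will be to guarantee that a single endpoint is reachable along \emph{every} increasing sequence: a priori $\Gamma_{z_n}$ could fail to overshoot $r$ (when $f(z_n)\le r$) and then only meet the component of $O$ near $\ell$, so that no fixed endpoint is approached by the whole sequence. The crux is therefore to control this ``switching across the band'', and the only tool available for it is the $\cadlag$ structure itself — the existence of the left limits $\ell,r$ together with the ordering $g\le f$ — used to extract from the hypothesis $[g(s),f(s)]\cap O\neq\emptyset$ (valid for \emph{all} $s\in(t-\delta,t)$) a consistent side for the approximation. Once this delicate point is settled, the points constructed from the accumulation of $O$ at the chosen endpoint lie in $\liminf_n(\Gamma_{z_n}\cap O)$ for every $z_n\upuparrows t$, giving the required element of $\overrightarrow{\Gamma\cap O}$ and completing the verification of \thref{ass}.
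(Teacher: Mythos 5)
Your proposal is not a complete proof: the entire second case, $O\cap(g(t-),f(t-))=\emptyset$, is left unresolved. You correctly reduce the problem to exhibiting one point of $\liminf_n(\Gamma_{z_n}\cap O)$ valid for \emph{every} strictly increasing $z_n\upuparrows t$, you dispose of the case where $O$ meets the open band $(g(t-),f(t-))$ (and, implicitly, the degenerate case $g(t-)=f(t-)$), and you correctly identify the remaining difficulty: when $O\subset(-\infty,g(t-))\cup(f(t-),\infty)$, the set $\Gamma_{z_n}\cap O$ is contained in $[g(z_n),g(t-))\cup(f(t-),f(z_n)]$, so it is pinned near $g(t-)$ whenever $f(z_n)\le f(t-)$ and near $f(t-)$ whenever $g(z_n)\ge g(t-)$, and nothing in the \cadlag\ hypothesis obviously prevents these two regimes from interlacing as $s\upuparrows t$. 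But having named this ``switching across the band'' as the crux, you write ``the hard part will be\ldots'' and ``once this delicate point is settled\ldots'' and supply no argument. That unhandled configuration is exactly where the content of the proposition lies; everything preceding it is routine. Flagging the crux is not resolving it, so as written the proposal establishes the claim only in your first case.

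For comparison, the paper's proof does not split into your two cases. It sets $a^*:=\inf(\cl O\cap[g(t-),f(t-)])$ and $a_s:=\inf(O\cap\Gamma_s)$, argues that $a_s\to a^*$ as $s\upuparrows t$, and then obtains $a^*\in\overrightarrow{\Gamma\cap O}$ at $t$ from points $y_s\in O\cap\Gamma_s\cap[a_s,a_s+(t-s)]$. The step that speaks to your worry is the bound $\limsup a_s\le a^*$ in the sub-case $a^*<f(t-)$, which rests on producing a point $y^{\epsilon}\in O\cap[g(t-)+\epsilon/2,\,f(t-)-\epsilon/2]\cap[a^*,a^*+\epsilon]$ lying in $\Gamma_s\cap O$ for all $s$ near $t$. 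Note that the existence of such a $y^\epsilon$ is precisely the statement that $O$ meets the open band $(g(t-),f(t-))$ near $a^*$, i.e.\ it places one back in the configuration you call easy; so your instinct that the complementary configuration is the delicate one is sound, and any complete argument must either rule that configuration out or handle it directly. Your proposal does neither.
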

\begin{proof}
We only show that $\Gamma$ satisfies \thref{ass}, since  the other assumptions are easier to verify. Let $O$ be an open set with $\Gamma_s \cap O \neq \emptyset$ for $s \in (t-\delta,t)$. Let $a^*:=\inf (\cl O \cap [g(t-),f(t-)])$ and $a_s:=\inf(O \cap \Gamma_s)$.  We claim that $\lim_{s\nearrow t} a_s=a^*$ from which the proposition follows.

The set $\cl O \cap [g(t-),f(t-)]$ is easily seen to be non empty, so $a^*$ is well defined. Now we verify that $\liminf a_s \geq a^*$. Take $y_s \in O\cap \Gamma_s \cap [a_s,a_s + t-s]$. Then, $g(t-) \leq\liminf y_s \leq \liminf a_s$. Moreover, $\limsup a_s = \limsup y_s  \leq \limsup f(s)$.  It is clear that $a_s \in \cl O$ so $\liminf a_s \in \cl O$. Hence, $\liminf a_s \in \cl O \cap [g(t-),f(t-)]$ and then $a^* \leq \liminf a_s$.  Now we show that $a^* \geq \limsup a_s$.  There are two cases, in the first $a^*=f(t-)$.  Then, $a_s \leq f(s)$ and $\limsup a_s \leq f(t-)=a^*$.  In the second, $a^* < f(t-)$. Let $\epsilon_0>0$ be such that for $\epsilon \in (0,\epsilon_0)$   we have $a^* < f(t-)-\epsilon$ and $g(t-) + \epsilon < f(t-)  -\epsilon$ and
\[
\exists y^{\epsilon} \in O \cap [g(t-) + \frac{1}{2}\epsilon, f(t-)-\frac{1}{2}\epsilon]\cap [a^*,a^*+\epsilon].
\]
Now fix $\epsilon \in (0,\epsilon_0)$. We easily verify that $y^{\epsilon} \in \Gamma_s$ for $s \in (t-\eta,t)$ where $\eta$ is such that $|f(s)-f(t-)|<\epsilon/3$ and $|g(s)-g(t-)|<\epsilon/3$. Hence $y^{\epsilon} \in \Gamma_s \cap O$ implying that $a_s \leq y^{\epsilon} \leq a^*+\epsilon$.
\end{proof}

\section{An application to integral functionals}\label{lab:sectionApp}
In this section we develop an interchange rule for integral functionals of cadlag functions whics builds on the  the representation \eqref{eq:MichaelRepresentation}. Interchange rules go back to the seminal paper of \cite{roc68} in decomposable spaces of $\reals^d$-valued measurable functions and are fundamental in obtaining convex duality in calculus of variations and optimal control; see, e.g., \cite{roc78} or \cite{pp14} for a more recent application . The interchange rule proved here is a starting point for the companion paper \cite{pt21a}  where integral functionals of $\cadlag$ stochastic processes are analyzed in detail. Further applications are given in the follow-up papers \cite{pt22a,pt22b}.

A function $h : [0,T]  \times \reals^d \to \ereals$ is a \textit{normal integrand} on $[0,T]$ if its {\em epigraphical mapping} $\epi h: [0,T] \tos  \reals^d \times \reals$ defined by
\[
\epi h (t) := \{(x,\alpha) \in   \reals^d \times \reals \mid  h (t,x) \leq \alpha\}
\]
is closed-valued and measurable. When this mapping is also convex-valued, $ h $ is  a \textit{convex normal integrand}. A general treatment of normal integrands on $\reals^d$ can be found from \cite[Chapter 14]{rw98} while integrands on a Suslin space are systematically presented in \cite{cv77}. In particular, a normal integrand $h$ is jointly measurable so that the {\em integral functional} with respect to a nonnegative Radon measure $\mu$ on $[0,T]$ given by 
\[
K(\pathy):=\int_{[0,T]} h(t, \pathy(t))\mu(dt)
\]
is well-defined for any measurable $\pathy:[0,T]\to \reals^d$. As usual in convex analysis,  an integral is defined as $+\infty$ unless the positive part is integrable. For a normal integrand $ h : [0,T] \times \reals^d \to \ereals$ the {\em domain mapping } is defined by  $\dom h_t=\{y \in \reals^d \mid h(t,y)<\infty\}$, its image closure is 
\begin{equation*}
S_{t} :=\{y \in \reals^d  \mid  y \in\cl \dom h(t,\cdot)\}
\end{equation*}
and
\begin{equation*}
\pathspace(S):=\{\pathy \in \pathspace \mid  \pathy(t)\in S_t\ \forall t \in [0,T] \}
\end{equation*}
is the set of {\em \cadlag\ selections} of $S$.  

In the next assumption we collect the necessary conditions  in order to obtain the interchange rule of Theorem \ref{thm:ifcadlag}. In particular, we require a representation of the mapping $S$ in terms of its $\cadlag$ selections. As we have know, Theorem \ref{lab:maintheorem1} gives necessary and sufficient conditions for the mapping $S$ to have such a representation.
\begin{assumption}\thlabel{ass:clS}
Assume $\mu$ is a  nonnegative Radon measure and $h$ is a convex normal integrand $h:[0,T] \times \reals^d \to \ereals$ such that 
\begin{align}\thlabel{ass:clS1}
S_t&= \cl \{\pathy(t) \mid \{\pathy \in \pathspace(S)\}\quad\forall\ t,\\
\pathspace(S) &=\cl(\dom K\cap\pathspace(S)),\nonumber
\end{align}
where the latter closure is with respect to pointwise convergence. 
This means that for each $\pathy \in \pathspace(S)$ there exists a sequence $\{\pathy_{\nu} \}_{\nu=1}^{\infty} \subset \dom K\cap \pathspace(S)$ converging pointwise to $\pathy$. 
\end{assumption}

The following theorem is variant of the main theorem in \cite{per18a} that established a similar interchange rule for integral functionals of continuous functions. In that context, the first condition in \thref{ass:clS} is simply the original Michaels representation while the second condition is analyzed in detail in \cite{per14}.
\begin{theorem}\label{thm:ifcadlag}
Under Assumption \ref{ass:clS},
\[
\inf_{\pathy\in \pathspace(S)} \int_{[0,T]}h(t,\pathy(t))\mu(dt)  = \int_{[0,T]} \inf_{y\in\reals^d} h(t,y)\mu(dt)
\]
as soon as the left side is less than $+\infty$.
\end{theorem}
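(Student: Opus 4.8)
The plan is to prove the two inequalities separately, writing $m(t):=\inf_{y\in\reals^d}h(t,y)$ and $K(\pathy):=\int_{[0,T]}h(t,\pathy(t))\,\mu(dt)$. The inequality $\inf_{\pathy}K(\pathy)\ge \int m\,d\mu$ is the easy one: for every $\pathy\in\pathspace(S)$ one has $h(t,\pathy(t))\ge m(t)$ pointwise, so $K(\pathy)\ge\int m\,d\mu$, and taking the infimum gives the bound. The same pointwise domination shows that $R:=\int m\,d\mu$ is well defined in $[-\infty,+\infty)$ whenever the left-hand side is finite: choosing $\pathy$ with $K(\pathy)<\infty$ forces $\int m^+\,d\mu\le\int h^+(t,\pathy(t))\,d\mu<\infty$. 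It remains to prove $\inf_\pathy K(\pathy)\le R$, i.e.\ that for every $\alpha>R$ there is a $\cadlag$ selection with $K(\pathy)<\alpha$.

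First I would produce the right countable family. By the first part of \thref{ass:clS} the mapping $S$ has the representation \eqref{eq:MichaelRepresentation}, so Proposition~\ref{labprop:rep} yields a countable family of $\cadlag$ selections whose values are dense in $S_t$ for every $t$; replacing each member by a pointwise-approximating sequence drawn from $\dom K\cap\pathspace(S)$, as furnished by the second part of \thref{ass:clS}, and relabelling, I obtain a countable family $\{\pathy_\nu\}_{\nu\in\naturals}\subset\dom K\cap\pathspace(S)$ that still satisfies $S_t=\cl\{\pathy_\nu(t)\}_{\nu\in\naturals}$ for all $t$. The key pointwise identity is
\[
\inf_{\nu\in\naturals} h(t,\pathy_\nu(t))=m(t),\qquad t\in[0,T].
\]
This is where convexity enters: since $h(t,\cdot)$ is convex and lower semicontinuous, its infimum is already approached from within the relative interior of $\dom h(t,\cdot)$, where $h(t,\cdot)$ is continuous; density of $\{\pathy_\nu(t)\}$ in $S_t=\cl\dom h(t,\cdot)$ then lets me find indices with $h(t,\pathy_\nu(t))$ arbitrarily close to $m(t)$.

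Next I would pass to a finite family. The functions $h(\cdot,\pathy_\nu(\cdot))$ are measurable and, since each $\pathy_\nu\in\dom K$ may be assumed to satisfy $K(\pathy_\nu)>-\infty$ (otherwise the infimum is already $-\infty$ and there is nothing to prove), they are $\mu$-integrable. Hence $m_N:=\min_{\nu\le N}h(\cdot,\pathy_\nu(\cdot))$ decreases to $m$ with $m_N\le h(\cdot,\pathy_1(\cdot))$ integrable from above, so monotone convergence gives $\int m_N\,d\mu\downarrow R$; I fix $N$ with $\int m_N\,d\mu<\alpha'$ for some $\alpha'<\alpha$. Letting $\nu(t)\in\{1,\dots,N\}$ be the least index attaining $m_N(t)$, which is measurable, the map $\hat\pathy(t):=\pathy_{\nu(t)}(t)$ would be an optimal measurable selection but is not $\cadlag$. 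The construction that repairs this is to replace the measurable partition $A_\nu:=\{\nu(\cdot)=\nu\}$ by a partition of $[0,T]$ into finitely many half-open intervals on each of which a single $\pathy_\nu$ is used: pasting finitely many $\cadlag$ functions along consecutive intervals $[a_{j},a_{j+1})$ produces again a $\cadlag$ selection.

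The hard part will be this last interval approximation. Using that $\mu$ is Radon I would approximate each $A_\nu$ in $\mu$-measure by a finite union of intervals, pass to the common refinement, and assign to each sub-interval one label in $\{1,\dots,N\}$, obtaining a right-continuous step index $\tilde\nu$ with $\mu(\{\tilde\nu\neq\nu\})$ as small as desired. On the agreement set the integrand equals $m_N$, while on the small disagreement set it is one of the finitely many integrable functions $h(\cdot,\pathy_\nu(\cdot))$; absolute continuity of these finitely many integrals — precisely the point where the $\dom K$ density in \thref{ass:clS} is indispensable — bounds the excess by $\alpha-\alpha'$. The resulting $\cadlag$ selection $\pathy:=\pathy_{\tilde\nu(\cdot)}(\cdot)$ then satisfies $K(\pathy)<\alpha$. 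The case $R=-\infty$ is handled identically, replacing the targets $m(t)+\epsilon$ by $-1/\epsilon$ to drive $K$ to $-\infty$.
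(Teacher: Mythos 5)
Your argument is correct in outline, but it is a genuinely different proof from the one in the paper. The paper reduces the statement in one step to \cite[Theorem~1]{bv88}: the second half of \thref{ass:clS} together with PCU-stability of $\pathspace(S)$ gives the interchange rule with the inner infimum taken over the essential supremum $\Gamma$ of $\pathspace(S)$, and the actual work is to show $S_t\subseteq\Gamma_t$ $\mu$-a.e., which is done by separating $S_t$ from $\Gamma_t$ with countably many rational half-spaces and running a Lindel\"of covering argument with the right half-open intervals on which a right-continuous selection stays in the separating half-space. You instead re-derive the relevant special case of the Bouchitt\'e--Valadier interchange from scratch: a countable family in $\dom K\cap\pathspace(S)$ that is dense in every $S_t$, the pointwise identity $\inf_\nu h(t,\pathy_\nu(t))=m(t)$, truncation to a finite family, and a pasting of finitely many selections along half-open intervals chosen by approximating the optimal measurable partition. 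This buys a self-contained proof that makes visible exactly where each half of \thref{ass:clS} is used (the representation for the pointwise identity, the $\dom K$-density for integrability and absolute continuity), at the cost of being longer; the paper's route is shorter given the cited machinery and is the form that carries over to the stochastic companion paper. Two steps of yours deserve to be written out: the identity $\inf_\nu h(t,\pathy_\nu(t))=m(t)$ is not a consequence of density in $\cl\dom h(t,\cdot)$ alone (a dense set could a priori consist of points where the lsc function is large), so you must record that the relative interior of $\dom h(t,\cdot)$ is relatively open in $S_t$, that the infimum of a proper convex function is attained in the limit from its relative interior, and that $h(t,\cdot)$ is continuous there --- which is exactly the convexity you invoke; and the interval approximation should be done with the algebra of half-open intervals $[a,b)$ (plus $[a,T]$), for which approximation in measure of Borel sets holds for any finite Borel measure on $[0,T]$, atoms included, so that the resulting step index $\tilde\nu$ is right-continuous and the pasted selection is indeed \cadlag.
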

\begin{proof}
We have $\pathspace(S)=\cl(\dom K^{\mu}_h\cap \pathspace(S))$ and $\pathspace(S)$ is PCU-stable in the sense of \cite{bv88}, so, by \cite[Theorem~1]{bv88},
\[
\inf_{\pathy\in \pathspace(S)} \int_{[0,T]}h(t,\pathy(t))\mu(dt)  = \int_{[0,T]} \inf_{y\in\Gamma_t} h(t,y)\mu(dt), 
\]
where $\Gamma$ is the essential supremum of $\pathspace(S)$, i.e., the smallest (up to a $\mu$-null set) closed-valued mapping for which every $\pathy\in \pathspace(S)$ is a selection of $\Gamma$ $\mu$-almost everywhere. It remains to show that $S_t\subseteq\Gamma_t$ $\mu$-almost everywhere, since then  the infimum over $\Gamma_t$ can be taken instead over all of $\reals^d$. 
	
It suffices to show that, for every closed ball $\cl \uball_\nu$ with radius $\nu=1,2,\dots$, we have $S_t\cap \cl \uball_\nu\subseteq\Gamma_t\cap \cl \uball_\nu$ $\mu$-almost everywhere. Thus we may assume that $\Gamma$ and $S$ are compact-valued. Assume for a contradiction that $B=\{t\mid S_t\not \subset \Gamma_t\}$ satisfies $\mu(B)>0$. Let $H_{v,b} :=\{x\in\reals^d\mid v\cdot x < b\}$ and $B_{v,b}:=\{t\mid S_t\cap H_{v,b}\ne\emptyset, H_{y,b}\cap \Gamma_t =\emptyset\}$. Since $S$ and $\Gamma$ are compact convex-valued, 
\[
B=\bigcup_{(v,b)\in\mathbb Q^d\times \mathbb Q} B_{v,b}.
\]
Since $\mu(B)>0$, and rationals vectors are countable, there exists $(v,b)\in\mathbb Q^d\times \mathbb Q$ such that $\mu (B_{v,b})>0$.  Since $\mu$ is Radon, passing to a subset if necessary, we may assume that $B_{v,b}$ is closed and still of positive measure. For every $t\in B_{v,b}$ there exists $\pathy^t\in \pathspace(S)$ such that $\pathy^t(t)\in S_t\cap H_{v,b}$. By right-continuity, there exists $\delta(t)$ such that 
\[
\pathy^t(t')\in H_{v,b}\quad\forall t'\in[t,t+\delta(t)).
\]
Now $\{[t,t+\delta(t))\mid t\in B_{v,b}\}$ is an open cover of $B_{v,b}$ in the $B_{v,b}$-relative right half-open topology of $[0,T]$. The topology $\tau_r$  clearly is separable and it is shown in \cite{Sorgenfrey1947} to be paracompact. Hence it is Lindel\"of by \cite[Theorem VIII.7.4]{Dugundji1966}. Since $B_{\nu,q}$ is closed and the topology $\tau_r$ is Lindel\"of, there exists a countable subcover $\{[t^\nu,t^\nu+\delta(t^\nu))\mid t^\nu\in B_{v,b}\}$. Since $\mu(B_{q,v})>0$, $\sigma$-additivity of $\mu$ implies that  $\mu([t^\nu,t^\nu+\delta(t^\nu))>0$ for some $\nu$. But, for every $t'\in [t^\nu,t^\nu+\delta(t^\nu))$,  $\pathy^{t^\nu}(t')\notin \Gamma_{t'}$, which contradicts that every $\pathy\in \pathspace(S)$ is a selection of $\Gamma$ $\mu$-almost everywhere.
\end{proof}

\section{Appendix} \label{sec:others}
The following result is a special case of \cite[Proposition 2.5]{mic56}.
\begin{lemma}\label{lemma:iscStab}
Let  $\Gamma^1, \Gamma^2: [0,T] \rightrightarrows  \reals^d$ be $\tau_r$-isc mappings. Let $\epsilon>0$. Then, the mapping $\Gamma = \Gamma^1 \cap (\Gamma^2 + \epsilon \uball)$ is also $\tau_r$-isc.
\end{lemma}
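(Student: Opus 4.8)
The plan is to verify the definition of $\tau_r$-isc directly: for an arbitrary open set $O\subset\reals^d$ I will show that $\Gamma^{-1}(O)$ is relatively $\tau_r$-open by exhibiting, around each of its points $t_0$, a half-open interval $[t_0,t_0+\delta)$ on which $\Gamma$ meets $O$. The whole argument rests on exploiting the slack provided by the open ball: a witness point lying in $O\cap\Gamma^1_{t_0}\cap(\Gamma^2_{t_0}+\epsilon\uball)$ sits \emph{strictly} inside the $\epsilon$-fattening of $\Gamma^2$, and this strict margin is precisely what lets the inner-semicontinuity of both factors propagate the intersection to a right-neighborhood of $t_0$.

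Concretely, I would fix $t_0\in\Gamma^{-1}(O)$ and pick a witness $y\in O\cap\Gamma^1_{t_0}\cap(\Gamma^2_{t_0}+\epsilon\uball)$ together with $z\in\Gamma^2_{t_0}$ satisfying $|y-z|<\epsilon$. Setting $\beta:=\epsilon-|y-z|>0$, I choose radii $\rho,\sigma>0$ with $\rho+\sigma<\beta$ and small enough that $y+\rho\uball\subset O$ (possible since $O$ is open). The parameter $\beta$ records the available margin, and its allocation between $\rho$ and $\sigma$ is the one genuinely quantitative choice in the proof.

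Then I invoke $\tau_r$-isc twice. Since $y\in\Gamma^1_{t_0}\cap(y+\rho\uball)$ and $\Gamma^1$ is $\tau_r$-isc, the set $(\Gamma^1)^{-1}(y+\rho\uball)$ is relatively $\tau_r$-open and contains $t_0$, so it contains some $[t_0,t_0+\delta_1)\cap[0,T]$; similarly $z\in\Gamma^2_{t_0}\cap(z+\sigma\uball)$ yields $[t_0,t_0+\delta_2)\cap[0,T]\subset(\Gamma^2)^{-1}(z+\sigma\uball)$. On the common interval with $\delta=\min(\delta_1,\delta_2)$, for each $t$ I obtain $w\in\Gamma^1_t\cap(y+\rho\uball)$ and $z_t\in\Gamma^2_t\cap(z+\sigma\uball)$. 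The point $w$ already lies in $O$ (as $y+\rho\uball\subset O$) and in $\Gamma^1_t$, while the triangle inequality $|w-z_t|\le|w-y|+|y-z|+|z-z_t|<\rho+|y-z|+\sigma<\epsilon$ places $w$ in $\Gamma^2_t+\epsilon\uball$. Hence $w\in O\cap\Gamma_t$, so $[t_0,t_0+\delta)\cap[0,T]\subset\Gamma^{-1}(O)$, as required.

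I do not expect a real obstacle beyond correct bookkeeping: the only thing to get right is the allocation of $\beta$ between $\rho$ and $\sigma$ so that the three-term estimate closes strictly below $\epsilon$. It is worth emphasizing, as a sanity check, that no slack argument of this kind is available for the intersection of two \emph{arbitrary} $\tau_r$-isc mappings; the statement genuinely uses that one factor is fattened by the \emph{open} ball, which is what makes the witness interior and the propagation to a right-neighborhood possible.
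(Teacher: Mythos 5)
Your argument is correct: every step checks out, including the key quantitative point that the witness $y$ sits at strictly positive distance $\beta=\epsilon-|y-z|$ from the boundary of the $\epsilon$-fattening, so that the slack can be split between the two inverse images and closed by the triangle inequality; and your use of the fact that a relatively $\tau_r$-open set containing $t_0$ contains some $[t_0,t_0+\delta)\cap[0,T]$ is exactly the right characterization of Sorgenfrey openness. However, your route is genuinely different from the paper's. The paper gives a structural one-line reduction: it rewrites $\Gamma^{-1}(O)$ as the set of $t$ at which the product mapping $\Gamma^2\times\Gamma^1$ meets the set $W\cap(\reals^d\times O)$, where $W=\{(x,y)\mid x-y\in\epsilon\uball\}$ is open precisely because the ball is open, and then concludes by the stability of inner semicontinuity under products and preimages of open sets --- this is why the lemma is presented as a special case of Michael's Proposition 2.5. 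The paper's proof is shorter but leaves the reader to supply the product-mapping facts; yours is longer but entirely self-contained and makes explicit where the openness of $\epsilon\uball$ enters (your closing remark that the intersection of two arbitrary $\tau_r$-isc mappings need not be $\tau_r$-isc is the correct moral, and corresponds in the paper's formulation to the openness of $W$). Either proof is acceptable.
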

\begin{proof}
Let $O$ be an open set of $\reals^d$, then
\[
\begin{split}
\Gamma^{-1}(O)
&= \{ t \in [0,T] \mid O \bigcap [ \Gamma^1(t) \cap (\Gamma^2(t) + \epsilon \uball)] \neq \emptyset \}\\
&=  \{ t \in [0,T] \mid \Gamma^2 \times\Gamma^1   \cap  [ W \cap (\reals^d \times O)] \neq \emptyset \},
\end{split}
\]
where $W=\{(x,y) \in \reals^d \times \reals^d \mid x-y \in \epsilon \uball \}$.
Hence $\Gamma^{-1}(O)$ is $\tau_r$-open.
\end{proof}

\bibliographystyle{plain}
\bibliography{sp2}

\end{document}